  \newcommand{\yngt}{\youngtabloid}
\newtheorem{theorem}{Theorem}[section] 
\newtheorem{corollary}[theorem]{Corollary}
\newtheorem{proposition}[theorem]{Proposition}
\newtheorem{lemma}[theorem]{Lemma}
\theoremstyle{definition}
\newtheorem{definition}[theorem]{Definition} 
\newtheorem{example}[theorem]{Example}
\newcommand{\N}{\mathbf{N}}
\renewcommand{\epsilon}{\varepsilon}
\renewcommand{\emptyset}{\varnothing}
\DeclareMathOperator{\Irr}{Irr}
\DeclareMathOperator{\Inf}{Inf}
\DeclareMathOperator{\Def}{Def}
\DeclareMathOperator{\Res}{Res}
\DeclareMathOperator{\Ind}{Ind}
\DeclareMathOperator{\Defres}{Def\hskip-0.3pt res}
\newcommand{\Ch}{\mathcal{C}}
\newcommand{\sgn}{\mathrm{sgn}}
\DeclareMathOperator{\sgnop}{sgn}
\newcommand{\nsgn}{\epsilon}
\newcommand{\wb}{\circ}
\newcommand{\bb}{\bullet}
\newcounter{thmlistcnt}
\newenvironment{thmlist}%
	{\setcounter{thmlistcnt}{0}%
	\begin{list}{\emph{(\roman{thmlistcnt})}}{%
		\usecounter{thmlistcnt}%
		\setlength{\topsep}{0pt}%
		\setlength{\leftmargin}{0pt}%
		\setlength{\itemsep}{0pt}%
		\setlength{\itemindent}{17pt}}%
	}%
	{\end{list}}%
\newcounter{deflistcnt}
	{\setcounter{deflistcnt}{0}%
	\begin{list}{(\roman{deflistcnt})}{%
		\usecounter{deflistcnt}%
		\setlength{\topsep}{0pt}%
		\setlength{\leftmargin}{0pt}%
		\setlength{\itemsep}{0pt}%
		\setlength{\itemindent}{17pt}}%
	}%
	{\end{list}}%
\newcommand{\ch}{\operatorname{ch}}
\newcommand{\lda}{\lambda}
\newcommand{\lan}{\langle}
\newcommand{\ran}{\rangle}
\renewcommand{\theta}{\vartheta}
\renewcommand{\epsilon}{\varepsilon}
\address{
Anton Evseev \\
School of Mathematics  \\
University of Birmingham \\
Edgbaston \\
Birmingham B15 2TT, UK  \\
}
\email{a.evseev@bham.ac.uk}
\address{
Rowena Paget \\
School of Mathematics, Statistics \& Actuarial Science  \\
University of Kent \\
Canterbury \\
Kent CT2 7NF, UK \\
}
\email{r.e.paget@kent.ac.uk}
\address{
 Mark Wildon \\
Mathematics Department \\
Royal Holloway, University of London \\
Egham TW20 0EX, UK \\
}
\email{mark.wildon@rhul.ac.uk}
\keywords{deflation, wreath product, Murnaghan--Nakayama rule, Littlewood--Richardson rule} 
\subjclass[2010]{Primary 20C30; Secondary 05E10, 20C15}
\begin{document}

\title[Character deflations]{Character deflations
and a 
generalization of the Murnaghan--Nakayama rule}
\author{Anton Evseev, Rowena Paget and Mark Wildon}
\date{October 2013} 

\begin{abstract}
Given natural numbers $m$ and $n$, 
we define a deflation map from the 
 characters of the symmetric group
$S_{mn}$ to the characters of~$S_n$. 
This map is obtained by 
first restricting a character of $S_{mn}$ to the
wreath product $S_m \wr S_n$, and then taking the sum of 
the
 irreducible constituents
 of the restricted character
 on which the base group $S_m \times \cdots \times S_m$
acts trivially. 
We prove a combinatorial formula which gives the
values of the images of the irreducible characters of $S_{mn}$
under this map.
We also prove an
 analogous result for more general deflation maps
in which the base group is not required to act trivially.  
These results generalize 
the Murnaghan--Nakayama rule and special cases of the Littlewood--Richardson rule. 
As a corollary we obtain a new combinatorial formula
for the character multiplicities that are the subject of the
long-standing Foulkes' Conjecture.  
Using this formula we verify Foulkes' Conjecture in some new cases. 
\end{abstract}

\maketitle

\thispagestyle{empty}
\section{Introduction}\label{sec:intro}

Tableaux combinatorics is a pivotal theme  
in the representation theory of the symmetric groups.
Fundamental results in this area include the Murnaghan--Nakayama
rule for the values taken by irreducible characters of symmetric groups
and the Littlewood--Richardson rule (as well as its special case, Young's rule), 
which determines the restrictions of 
irreducible characters to Young subgroups of symmetric groups.

The two main results of this paper are Theorems~\ref{thm:cd} and~\ref{thm:hookdefl},
which give a combinatorial description of 
the restrictions of characters of symmetric groups to their
maximal imprimitive subgroups. Theorem~\ref{thm:cd} is a 
simultaneous generalization of the Murnaghan--Nakayama rule and Young's rule.
Theorem~\ref{thm:hookdefl} gives a further generalization, in which
Young's rule is replaced by a family of special cases of the Littlewood--Richardson rule.

As a corollary of Theorem~\ref{thm:cd}, we obtain in Proposition~\ref{prop:Foulkes}
a new recursive formula for the character multiplicities that are the subject of Foulkes' Conjecture, a long-standing problem which spans representation theory, invariant theory and algebraic combinatorics. We use this formula to verify
Foulkes' Conjecture in some new cases, extending the results in \cite{MN}.
 Figures~\ref{fig:FoulkesGraph} and~\ref{fig:FoulkesGraphsComp} in \S 5 show some of the data computed using this formula.

\subsection{Character deflations}
We now introduce the ideas needed
to state Theorem~\ref{thm:cd}.
By a construction originally due
to Frobenius,
the irreducible characters of the symmetric group $S_r$  are canonically
labelled by the partitions of $r$. As is usual, we write $\chi^\lambda$ for the irreducible
character labelled by the partition $\lambda$, and 
$\chi^{\lambda / \mu}$ for the character labelled by the
skew-partition $\lambda / \mu$. We refer the reader to 
\cite[Chapter 2]{JK} or \cite[\S 7.18]{StanleyII} for a construction
of these characters and to \cite[page 309]{StanleyII}
for background on skew-partitions.

For each $r \in \mathbf{N}$, it is well known (see,
for example, \cite[Exercise 5.2.8]{DixonMortimer}) that the maximal 
imprimitive subgroups of $S_r$ are precisely the imprimitive wreath 
products $S_m \wr S_n \le S_{r}$ for $m$, $n \in \mathbf{N}$ such that $mn = r$. 
Let $\theta$ be a character of $S_m$, and let $V$ be a representation of $S_m$ affording $\theta$. Then $V^{\otimes n}$ 
is a representation of 
the base group $S_m\times \cdots \times S_m$. The 
complement~$S_n$ of this base group acts on $V^{\otimes n}$ by permuting the factors:
\[
 g(v_1\otimes \cdots \otimes v_n) = v_{g^{-1} (1)} \otimes \cdots \otimes v_{g^{-1} (n)} 
\]
for $g \in S_n$ and $v_1, \ldots, v_n \in V$.
These two actions combine to give a representation of $S_m\wr S_n$ on $V^{\otimes n}$ (see~\cite[4.3.6]{JK}). 
 We shall denote by $\widetilde{\theta^{\times n}}$ the character of $S_m\wr S_n$ 
afforded by this representation.
We also need the characters of $S_m \wr S_n$ whose
kernel contains \hbox{$S_m \times \cdots \times S_m$}.
These characters are precisely the inflations of the
characters of $S_n$ to $S_m \wr S_n$ along the canonical surjection
\hbox{$S_m \wr S_n \twoheadrightarrow S_n$}. 
If $\nu$ is a partition of $n$,
we denote by $\Inf_{S_n}^{S_m \wr S_n} \!\chi^\nu$ the irreducible
character of $S_m \wr S_n$ constructed in this way. 
It is easily seen that the
characters $\widetilde{\theta^{\times n}} \Inf_{S_n}^{S_m \wr S_n}\!\chi^\nu$ 
obtained
by multiplying characters of these two types are irreducible.
(By \cite[Theorem 4.3.33]{JK}, any irreducible character of $S_m \wr S_n$ is induced from a suitable product of characters of this form.)

Given a finite group $G$, we let
$\Ch(G)$ denote the abelian group of virtual characters of $G$.

\begin{definition}\label{def:def}
Let $m$, $n \in \mathbf{N}$ and let $\theta$ be an irreducible
character of $S_m$. Let~$\xi$ be an irreducible character
of $S_m \wr S_n$. We define
\[ \Def^\theta_{S_n} \xi = \begin{cases}
\chi^\nu & \text{if $\xi = \widetilde{\theta^{\times n}} \Inf_{S_n}^{S_m
\wr S_n}\!\chi^\nu$
where $\nu$ is a partition of $n$} \\
0 & \text{otherwise}. \end{cases} \]
Let
$\Def_{S_n}^\theta : \Ch(S_m \wr S_n) \rightarrow \Ch(S_n)$ be the
group homomorphism defined by linear extension of this definition. 
Given $\psi \in \Ch(S_m \wr S_n)$,
we say that $\Def_{S_n}^\theta \psi$ is the \emph{deflation of $\psi$
with respect to $\theta$}.
Let $\Defres_{S_n}^\theta : \Ch(S_{mn}) \rightarrow
\Ch(S_n)$ be the group homomorphism defined by
\[ \Defres^\theta_{S_n} \chi = \Def^\theta_{S_n} 
\Res^{S_{mn}}_{S_m \wr S_n} \chi \]
for $\chi \in \Ch(S_{mn})$.
\end{definition}
 In the case when $\theta$ is the trivial character of $S_m$, we shall omit $\theta$ and simply write $\Def_{S_n}$ and $\Defres_{S_n}$. 
If $V$ is a complex representation of $S_m \wr S_n$ with
 character~$\chi$, then $\Def_{S_n}\chi$ is the
character of the maximal subrepresentation of $V$
on which the base group $S_m \times \cdots \times S_m$ acts
trivially. 

Theorem~\ref{thm:cd} 
gives a combinatorial rule for the values
of $\Defres_{S_n} \chi^{\lambda / \mu}$ where $\lambda / \mu$ is a 
skew-partition of $mn$. In order to state this rule, we review and extend the definition of a border-strip tableaux
(see~\cite[\S 7.17]{StanleyII}). 

Recall that a skew-partition $\sigma / \tau$ is said to be
a \emph{border strip} (or rim hook) if the Young diagram of $\sigma / \tau$
is connected and contains no $2 \times 2$ square. 
The $\emph{length}$ of the border strip $\sigma / \tau$
is $|\sigma / \tau|$ and its \emph{height} 
is defined to be one less than its number of non-empty rows. 
If $\lambda / \mu$ is a skew-partition, then
we define
a \emph{border-strip tableau} of shape $\lambda / \mu$
to be an assignment of the elements of a set $J \subseteq \N$ 
to the boxes of the Young
diagram of $\lambda / \mu$ so that the rows and columns
are non-decreasing, and for each $j \in J$, the boxes labelled $j$
form a border strip; if $J = \{1,\ldots,k\}$, and for each $j \in J$
the border strip formed by the boxes labelled $j$ has length~$\alpha_j$,
then we say that the 
tableau has \emph{type} $(\alpha_1,\ldots,\alpha_k)$.
We need
the following three further definitions, which are illustrated in
Figure~\ref{fig:border_strip} and Example~\ref{example:border_strip} below.

\begin{definition}\label{def:sgn}
Let $T$ be a border-strip tableau. The
\emph{sign} of $T$ is defined
by $\sgn(T) = (-1)^h$, where $h$ is the sum of the 
heights of the border strips forming $T$.
\end{definition}

\enlargethispage{3.5pt}

\begin{figure}[b]
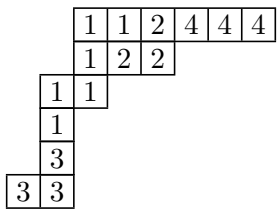
 
\[ \young(::112444,::122,:11,:1,:3,33) \]
\caption{A border-strip tableau of shape $(8,5,3,2,2,2) / (2,2,1,1,1)$ and type $(6,3,3,3)$. The heights of the border strips labelled $1,2,3,4$ are $3,1,1,0$ respectively, 
and the sign of this border-strip tableau is thus $-1$. }      
\label{fig:border_strip}         
\end{figure}

\begin{definition}\label{def:row_number}
Let $\lambda / \tau$ be a border strip in a partition $\lambda$.
If the 
lowest-numbered row of $\lambda$ met by $\lambda / \tau$
is row $k$ then we define the \emph{row number} of $\lambda / \tau$
to be $k$, and write $N(\lambda/ \tau) = k$.
\end{definition}

\medskip
Note that if $T$ is a border-strip tableau of
shape $\lambda / \mu$ and type $(\alpha_1, \ldots, \alpha_k)$
then there are partitions
\[ \mu = 
\lambda^0 \subset \lambda^1 \subset \cdots \subset \lambda^{k-1}
\subset \lambda^{k} = \lambda \]
such that for each $j \in \{1,\ldots,k\}$,
the border strip in $T$ labelled $j$ is $\lambda^j /\lambda^{j-1}$. 

\begin{definition}\label{def:a}
Let $m$, $n \in \N$ and let $\lambda / \mu$ be a skew-partition of $mn$.
Given a composition $\gamma = (\gamma_1, \ldots, \gamma_d)$ of $n$,
let $\gamma^{\star m} = (\gamma_1, \ldots, \gamma_1,
\ldots, \gamma_d, \ldots, \gamma_d)$ denote the composition of $mn$ obtained from 
$\gamma$ by
repeating each part $m$ times.
An \emph{$m$-border-strip tableau} of \emph{shape $\lambda / \mu$} and 
\emph{type~$\gamma$} is a border-strip tableau of shape $\lambda / \mu$
and type~$\gamma^{\star m}$ such that for each $j \in \{1,2,\ldots,d\}$,
the row numbers of the border strips
\[ \lambda^{(j-1)m+1} / \lambda^{(j-1)m},  
\ldots ,
\lambda^{jm} / \lambda^{jm-1} \]

\vskip-4pt
\noindent corresponding to the  $m$ parts in $\gamma^{\star m}$
of length
$\gamma_j$ satisfy
\begin{equation}\label{eq:row_numbers} 
N(\lambda^{(j-1)m+1} / \lambda^{jm}) 
\ge \cdots \ge N(\lambda^{jm} / \lambda^{jm-1}). 
\end{equation}
Let
\[ a_{\lambda/ \mu, \gamma} = \sum_{T} \sgn(T) \] 
where the sum is over all $m$-border-strip tableaux $T$ of shape~$\lambda / \mu$
and type~$\gamma$.
\end{definition}

\begin{theorem}\label{thm:cd} 
Let $m$, $n \in \N$ and let $\lambda / \mu$ be a skew-partition of $mn$.
If $\gamma$ is a composition of $n$ and $g \in S_n$ has cycle type $\gamma$
then
\[ (\Defres_{S_n} \chi^{\lambda / \mu})(g) = a_{\lambda/\mu,\gamma}. \]
\end{theorem}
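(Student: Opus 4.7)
My plan is to rewrite the deflation as an average over the base group, decompose this average according to the cycle structure of $g$, and reduce the theorem to a single-cycle combinatorial identity proved via the Murnaghan--Nakayama rule and a border-strip splitting argument.

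For the trivial deflating character, Definition~\ref{def:def} sends an irreducible $\xi$ of $S_m\wr S_n$ to $\chi^\nu$ when $\xi=\Inf_{S_n}^{S_m\wr S_n}\!\chi^\nu$ and to $0$ otherwise, so $\Def_{S_n}\psi$ is the $S_n$-character of the $S_m^{\,n}$-fixed subspace of any representation affording $\psi$. The projection formula for fixed-point subspaces then gives
\[
(\Defres_{S_n}\chi^{\lambda/\mu})(g) \;=\; \frac{1}{(m!)^n}\sum_{h\in S_m^{\,n}}\chi^{\lambda/\mu}(hg).
\]
Writing $g=c_1\cdots c_d$ as a product of disjoint cycles with $c_j$ a $\gamma_j$-cycle and splitting the base group as $S_m^{\,n}=\prod_{j=1}^d S_m^{\gamma_j}$, iterated application of the skew branching rule for $\chi^{\lambda/\mu}$ restricted to the Young subgroup $S_{m\gamma_1}\times\cdots\times S_{m\gamma_d}$ gives
\[
\chi^{\lambda/\mu}(hg)=\sum_{\sigma^\bullet}\prod_{j=1}^{d}\chi^{\sigma^{(j)}/\sigma^{(j-1)}}\!\bigl(h^{(j)}c_j\bigr),
\]
summed over chains $\mu=\sigma^{(0)}\subset\cdots\subset\sigma^{(d)}=\lambda$ with $|\sigma^{(j)}/\sigma^{(j-1)}|=m\gamma_j$. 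Since the $h^{(j)}$ are independent, the average factors over $j$, reducing Theorem~\ref{thm:cd} to the single-cycle identity
\[
\frac{1}{(m!)^k}\sum_{h\in S_m^{\,k}}\chi^{\rho}(hc) \;=\; a_{\rho,(k)}
\]
for every skew shape $\rho$ of size $mk$ and every $k$-cycle $c\in S_k$. Granting this, substitution and the observation (immediate from Definition~\ref{def:a}) that an $m$-border-strip tableau of shape $\lambda/\mu$ and type $\gamma$ is exactly a chain $\sigma^\bullet$ together with, for each $j$, an $m$-border-strip tableau of shape $\sigma^{(j)}/\sigma^{(j-1)}$ and type $(\gamma_j)$---with signs factoring as a product over blocks---yield $a_{\lambda/\mu,\gamma}$.

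The single-cycle identity is the principal obstacle. One first collapses the average to $\frac{1}{m!}\sum_{\sigma\in S_m}\chi^{\rho}\bigl((\sigma,1,\ldots,1)c\bigr)$ by using that the map $h\mapsto h_1\cdots h_k$ is an $(m!)^{k-1}$-to-$1$ surjection onto $S_m$ and that $hc$ is $S_m\wr S_k$-conjugate (hence $S_{mk}$-conjugate) to $(h_1\cdots h_k,1,\ldots,1)c$. Since $(\sigma,1,\ldots,1)c\in S_{mk}$ has cycle type $(k\beta_1,\ldots,k\beta_s)$ when $\sigma$ has cycle type $\beta$, the Murnaghan--Nakayama rule expresses $\chi^{\rho}\bigl((\sigma,1,\ldots,1)c\bigr)$ as a signed sum over border-strip tableaux of shape $\rho$ and type $(k\beta_1,\ldots,k\beta_s)$. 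The task that remains---and the heart of the proof---is to match this, summed over $\sigma$ with weight $1/m!$, to the signed count of $m$-border-strip tableaux of shape $\rho$ and type $(k)$. The combinatorial input needed is a splitting lemma: for each length-$k\ell$ border strip $B$, the number of ways to decompose $B$ into a chain of $\ell$ nested length-$k$ border strips whose row numbers form a weakly decreasing sequence must match the class-counting factor $m!/z_\beta$, while sign compatibility reduces to $\height(B)\equiv\sum_i\height(B_i)\pmod 2$ (a consequence of the $k$-abacus description of border strips). Pinning down the precise bijection between row-number tie patterns and the cycle structure of $\sigma$ is what is hardest; once it is established, the single-cycle identity and with it Theorem~\ref{thm:cd} follow.
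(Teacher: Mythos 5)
Your global architecture matches the paper's: you express the deflation as an average over the base group (the paper's Proposition~\ref{prop:deflation}), split $g$ into cycles and factor the average via the skew branching rule (the paper's Lemma~\ref{lemma:skewres} and Proposition~\ref{prop:defres_decomp}, done there by induction one cycle at a time), observe that $m$-border-strip tableaux of type $\gamma$ factor over the blocks of $\gamma^{\star m}$, and reduce to the single-cycle identity $(\Defres_{S_k}\chi^{\rho})(c)=a_{\rho,(k)}$. Your collapse of the base-group average to $\frac{1}{m!}\sum_{\sigma\in S_m}\chi^{\rho}((\sigma,1,\ldots,1)c)$ and the cycle-type computation are also exactly the paper's Corollary~\ref{cor:deflation} and Lemma~\ref{lemma:cycles}. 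Up to this point the argument is correct.

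The genuine gap is the single-cycle identity itself, which you correctly identify as the heart of the proof and then do not prove. What has to be shown is
\[
\sum_{\beta\,\vdash\, m}\frac{1}{z_\beta}\,\chi^{\rho}(g_{k\beta})=a_{\rho,(k)},
\]
and your proposed ``splitting lemma'' is not a workable substitute: the left side is a sum of rationals with denominators $z_\beta$ while the right side is $0$ or $\pm 1$ (Lemma~\ref{lemma:horiz}), so no term-by-term bijection between border-strip tableaux of type $k\beta$ and decompositions of single strips can succeed; one must account simultaneously for the weights $1/z_\beta$, the fact that the Murnaghan--Nakayama rule is applied to one fixed ordering of the parts of $k\beta$, and the weakly-decreasing row-number condition, and this bookkeeping is precisely where the difficulty lives. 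The paper resolves it by an entirely different mechanism: Theorem~\ref{thm:ind} (Farahat's theorem, proved via the sign-preserving abacus bijection of Proposition~\ref{prop:nbij} between border-strip tableaux of type $n\alpha$ and $n$-quotient border-strip tableaux of type $\alpha$) rewrites $\chi^{\lambda/\mu}(g_{n\alpha})$ as $\nsgn_n(\lambda/\mu)$ times an induced character of $S_{\ell_0}\times\cdots\times S_{\ell_{n-1}}$ evaluated at $g_\alpha$; Proposition~\ref{prop:omega}(ii) then turns the weighted sum over $\alpha$ into the multiplicity of the trivial character, Frobenius reciprocity and Pieri's rule reduce this to each $\lambda^{(i)}/\mu^{(i)}$ being a horizontal strip, and Lemma~\ref{lemma:horiz} identifies that condition with the existence of the unique $m$-border-strip tableau of type $(n)$ and matches the signs. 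You would need to supply an argument of comparable substance (or invoke the plethysm identity $\langle s_{\rho},p_k\circ h_m\rangle=a_{\rho,(k)}$ together with a proof of it) before your proof is complete.
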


\begin{example}\label{example:border_strip}
Let $\lambda = (6,5,3,2)$ and let $\mu = (3,1)$.
The three different $2$-border-strip tableaux of shape 
$\lambda / \mu$ and type $\gamma = (1,2,3)$ are shown below.
\[
\rule[-27pt]{0pt}{29pt}\young(:::266,:1556,345,34) 
\qquad
\young(:::466,:1246,355,35) 
\qquad
\young(:::266,:1446,355,35)
\]
As required by Definition~\ref{def:a}, for each $j \in \{1,2,3\}$, the
row number of the border strip labelled $2j-1$ in each tableau
is at least the row number of the border strip labelled $2j$. Thus  
the first border strip corresponding to each part of $\gamma$
is added no higher up in each partition diagram than the second.
The sums of the heights of the border strips forming these tableaux are $4$, $4$ and $3$
and so their signs are $+1$, $+1$ and $-1$, 
respectively. By
Definition~\ref{def:a} we have~$a_{\lambda / \mu,\gamma} = 1$. Hence
Theorem~\ref{thm:cd} implies that
$(\Defres_{S_6}\chi^{(6,5,3,2) / (3,1)}) (g) = 1$
if  $g \in S_6$
has cycle type $(1,2,3)$. 
\end{example}

Deflation is closely related to plethysm of Schur functions (see~\cite[\S I.8]{Macdonald1995}). 
In fact, using the standard correspondence between characters of symmetric groups and symmetric functions, 
one can show that the special case $\gamma=(n)$ of Theorem~\ref{thm:cd} is equivalent to a result proved in~\cite[Section 9]{DLT}.
Also, in the special case $\mu=\varnothing$, the combinatorial description of Theorem~\ref{thm:cd} can be shown (using our Lemma~\ref{lemma:horiz} below)
 to be equivalent to the one given for plethysm by Macdonald~\cite[\S I.8, Example 8]{Macdonald1995}. These connections are discussed in more detail in~\S\ref{sec:concl}.

We prove Theorem~\ref{thm:cd} in \S\S\ref{sec:averaging}--\ref{sec:proof}.
The only prerequisites, apart from some basic character theory, are the
Murnaghan--Nakayama rule and the combinatorics of the abacus.  
In addition to being self-contained, our proof is highly combinatorial in the sense that the key steps, given in \S 3, can all be
stated in terms of explicit bijections between certain classes of tableaux.

\subsection{Some special cases}
It is clear than if $m=1$ then
$\Defres_{S_n} \chi=
\chi$ for any character $\chi$ of $S_n$, and so the 
special case $m=1$ of Theorem~\ref{thm:cd} asserts that
$\chi^{\lambda / \mu} (g) = a_{\lambda/ \mu , \gamma}$
for any skew-partition $\lambda / \mu$ of $n$ and
any element $g \in S_n$ of cycle type $\gamma$.
Equivalently, 
\[ \chi^{\lambda / \mu} (g) = \sum_{T} \sgn(T) \]
where the sum is over all border-strip tableaux of shape $\lambda / \mu$ 
and type $\gamma$. This is 
the Murnaghan--Nakayama rule, as stated in
\cite[Equation~(7.75)]{StanleyII}. 
It should be noted that we require the Murnaghan--Nakayama rule in 
\S\ref{subsec:omega_proof} below, and
so our work does not provide a new proof of this result.
In practice the Murnaghan--Nakayama rule is most frequently
used as a recursive formula for the values of characters or skew characters. 
Equation~\eqref{eq:defres_decomp_sgn} at the end of \S\ref{sec:proof}
formulates
Theorem~\ref{thm:cd} in this way.

As Stanley observes in \cite[page 348]{StanleyII}, 
it is far from obvious that
the character values given by the Murnaghan--Nakayama
rule applied to a skew-partition $\lambda / \mu$ and a composition
$\gamma$ are independent of the order of the parts of $\gamma$.
This remark applies even more strongly to Theorem~\ref{thm:cd}.
For example,
the reader may check that if $\lambda / \mu = (6,5,3,2) / (3,1)$,
as in Example~\ref{example:border_strip}, and $\gamma' = (2,1,3)$,
then there is a unique $2$-border-strip tableau of shape
$(6,5,3,2) / (3,1)$ and type $\gamma'$. Thus $a_{\lambda/\mu,\gamma'} = 1$,
but the sums defining  $a_{\lambda/ \mu,\gamma}$
and $a_{\lambda / \mu, \gamma'}$ are different.

Another 
special case of Theorem~\ref{thm:cd} worth noting
occurs when $g$ is the identity
element of $S_n$. If $\xi$ is an irreducible character of 
$S_m \wr S_n$ then either the base group 
$B = S_m \times \cdots \times S_m$ is contained in
the kernel of $\xi$ 
and $\left<\Res_B \xi, 1_B\right>
= \xi(1)$, or  
$\left<\Res_B \xi, 1_B\right> = 0$.
Hence, by linearity, we have 
\begin{equation}
\label{eq:deflation_special} 
(\Defres_{S_n} \chi)(1) = \left<\Res_B \chi ,1_B\right> 
\end{equation}
for any character $\chi$ of $S_{mn}$.
It now follows from Theorem~\ref{thm:cd} and Frobenius reciprocity that
\[ 
 a_{\lambda / \mu, (1^n)} =
\left< \chi^{\lambda / \mu},
 \Ind_{S_m \times \cdots \times S_m}^{S_{mn}}
1_{S_m} \times \cdots \times 1_{S_m} \right> 
\]
for any skew-partition $\lambda / \mu$ of $mn$.
It is clear from Definition~\ref{def:a} that $a_{\lambda / \mu, (1^n)}$ is the
number of semi-standard tableaux of shape $\lambda / \mu$ and
type $(m^n)$. Therefore, by setting $\mu = \emptyset$ in the previous equation, 
we obtain
a special case of Young's rule (see 
\cite[2.8.5]{JK} or
\cite[Proposition 7.18.7]{StanleyII}).

\subsection{Outline of the paper}
The remainder of the paper proceeds as follows. 
Throughout, we shall
 adopt the convention that if $\alpha$
is a partition of $r \in \N$, then $g_\alpha \in S_r$ is
an element of
cycle type~$\alpha$, and $z_\alpha$ is
the size of the centralizer of $g_\alpha$ in $S_r$. 
(The choice of $g_\alpha$
within the
conjugacy class is 
irrelevant.) If $\alpha = (\alpha_1,\ldots,\alpha_k)$, we write $n\alpha = (n\alpha_1,\ldots,n\alpha_k)$. 

In \S\ref{sec:averaging} we prove
 Proposition~\ref{prop:deflation},
 which implies that if  $\chi$ is a character of~$S_{mn}$
 and $g\in S_n$,
then $(\Defres_{S_n} \chi)(g)$ is the average value of
 $\chi$ on the coset of the
base group $S_m \times \cdots \times S_m$ in $S_m \wr S_n$
corresponding to $g$. 
Equation~\eqref{eq:deflation_special} above
is a special case of this result. 
 In the case when $g \in S_n$ is an $n$-cycle, 
 we obtain Proposition~\ref{prop:omega}(ii), which implies that if  $\lambda / \mu$ is a skew-partition of $mn$, then
\begin{equation}
\label{eq:cycle_deflation}
 \Defres_{S_n} \chi^{\lambda / \mu}(g) = 
\sum_{\alpha}\frac{\chi^{\lambda / \mu}(g_{n\alpha})}{z_\alpha} 
\end{equation}
where the sum is over all partitions $\alpha$ of $m$, and $n\alpha$ denotes the partition obtained from $\alpha$ by multiplying each of its parts by $n$.

In \S\ref{sec:skew} we state a theorem of Farahat (see~\cite[Section 4]{Farahat1954}), which gives a formula for the character
values $\chi^{\lambda/\mu}(g_{n\alpha})$ appearing on the right-hand side of~\eqref{eq:cycle_deflation}. We 
then give a character-theoretic proof of this theorem. 

In \S\ref{sec:proof} we combine the results of \S 3 and \S 4 to
 show that Theorem~\ref{thm:cd} holds when $g \in S_n$ is an $n$-cycle (Proposition~\ref{prop:basecase}) and then to deduce it in general. 

In \S\ref{sec:Foulkes} we apply our results on deflations to Foulkes' Conjecture 
on permutation characters the symmetric group. In particular, we prove a new recursive formula for the 
character multiplicities that appear in this conjecture. Using this formula we check Foulkes' Conjecture
in some new cases, extending the results in \cite{MN}.

In \S\ref{sec:gendef}, we consider the more general
deflation maps $\Def^\theta_{S_n}$. 
When $\theta=\chi^{(a,1^b)}$ is labelled by a hook partition, Theorem~\ref{thm:hookdefl} gives a combinatorial description of the value of
$\Defres^{\theta}_{S_n} \chi^{\lambda/ \mu}$ on an $n$-cycle.
This result generalizes the case $\gamma=(n)$ of Theorem~\ref{thm:cd} and may be viewed as
a simultaneous generalization of the 
Murnaghan--Nakayama rule
and a special case of the  Littlewood--Richardson rule.
 We  also give an illustrative example showing how our
 methods can be used to compute values of deflated characters in the non-hook case.  

Finally, in \S\ref{sec:concl}, we discuss the aforementioned connections between Theorem~\ref{thm:cd} and results in~\cite{DLT,Macdonald1995} stated in terms of symmetric functions.

\section{Deflation by averaging}\label{sec:averaging}

Let $m$, $n \in \N$. 
We shall think
of $S_m \wr S_n$ as the group of permutations of
\[ \{1,\ldots, m\} \times \{1,\ldots, n\} \]
that leaves invariant the set of blocks of the form $\Delta_j = \{(1,j), \ldots, (m,j)\}$, 
$1 \le j \le n$.					
Given $h_1, \ldots, h_n \in S_m$ and $g \in S_n$, we
write $(h_1,\ldots,h_n;g)$
for the permutation which sends $(i,j)$ 
to $(h_{gj} i,gj)$. 
This left action is equivalent to the action
defined in \hbox{\cite[4.1.18]{JK}}. 
Let $B = S_m \times \cdots \times S_m$ denote
the base group in the wreath product. As shorthand,
if $k = (h_1,\ldots,h_n) \in B$ 
then we shall write $(k\,;g)$ for $(h_1,\ldots,h_n;g)$.

\begin{lemma}\label{lemma:sum}
Let $m$, $n \in \mathbf{N}$, let $\theta$ be an irreducible character of $S_m$, 
and let $\xi$ be an irreducible character of $S_m \wr S_n$. 
 If $\xi = \widetilde{\theta^{\times n}} \Inf_{S_n}^{S_m \wr S_n}\!\chi^\nu$ for some partition $\nu$ of $n$ then 
 \[ \frac{1}{|B|} \sum_{k \in B} \xi (k\, ;g) 
 \widetilde{\theta^{\times n}} (k\, ;g) = \chi^\nu(g), \]
and if $\xi\in \Irr(S_m \wr S_n)$ is not of this form then the left-hand side is zero.
\end{lemma}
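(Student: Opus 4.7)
The plan is to recognise the left-hand side, as a function of $g \in S_n$, as a class function on $S_n$ and then pin it down by computing its inner product with each $\chi^\nu$.

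Put $H = S_m \wr S_n$ and define
\[
\phi_\xi(g) = \frac{1}{|B|} \sum_{k \in B} \xi(k\,;g)\,\widetilde{\theta^{\times n}}(k\,;g), \qquad g \in S_n.
\]
First I would check that $\phi_\xi$ is a class function on $S_n$. For $\sigma \in S_n$, the wreath-product conjugate $(e\,;\sigma)(k\,;g)(e\,;\sigma)^{-1}$ has the form $(\sigma\cdot k\,;\,\sigma g \sigma^{-1})$, where $\sigma\cdot k$ denotes the place-permuted tuple. As $k$ runs over $B$, so does $\sigma\cdot k$, and the reparametrisation $k \mapsto \sigma\cdot k$, combined with the fact that $\xi$ and $\widetilde{\theta^{\times n}}$ are class functions on $H$, yields $\phi_\xi(g) = \phi_\xi(\sigma g \sigma^{-1})$.

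Next I would compute $\lan \phi_\xi, \chi^\nu\ran_{S_n}$ for each $\nu \vdash n$. All values of $\widetilde{\theta^{\times n}}$ are integers (they are products of character values of $S_m$ via the usual cycle-product formula for the character of the wreath-product extension), so $\widetilde{\theta^{\times n}}$ coincides with its complex conjugate and may be replaced by it inside the sum. Using that $\Inf_{S_n}^H \chi^\nu$ takes value $\chi^\nu(g)$ at $(k\,;g)$ and interchanging the order of summation gives
\[
\lan \phi_\xi, \chi^\nu\ran_{S_n}
= \frac{1}{|H|}\sum_{(k\,;g)\in H} \xi(k\,;g)\, \overline{(\widetilde{\theta^{\times n}} \Inf_{S_n}^H \chi^\nu)(k\,;g)}
= \lan \xi,\, \widetilde{\theta^{\times n}} \Inf_{S_n}^H \chi^\nu \ran_{H}.
\]
As recorded in the introduction, $\widetilde{\theta^{\times n}} \Inf_{S_n}^H \chi^\nu$ is an irreducible character of $H$, so orthogonality of irreducible characters of $H$ shows that this inner product is $1$ if $\xi = \widetilde{\theta^{\times n}} \Inf_{S_n}^H \chi^\nu$ and $0$ otherwise.

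Since $\{\chi^\nu : \nu \vdash n\}$ is an orthonormal basis for the complex-valued class functions on $S_n$, knowing all of these inner products determines $\phi_\xi$: it equals $\chi^\mu$ if $\xi = \widetilde{\theta^{\times n}}\Inf_{S_n}^H \chi^\mu$ for some $\mu \vdash n$, and equals the zero function otherwise. I anticipate no serious obstacle; the only step deserving a little care is the conjugation-invariance check, while the irreducibility of $\widetilde{\theta^{\times n}}\Inf_{S_n}^H \chi^\nu$ is already available from the discussion preceding Definition~\ref{def:def}.
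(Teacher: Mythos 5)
Your proof is correct, but it takes a genuinely different route from the paper's. The paper works one coset of $B$ at a time: it invokes Isaacs' Lemma 8.14(b) to deduce from the non-vanishing of the coset sum that $\langle\Res_B\xi,\theta\times\cdots\times\theta\rangle\neq 0$, uses the Gallagher-type decomposition $\Ind_B^{S_m\wr S_n}(\theta\times\cdots\times\theta)=\sum_\nu\chi^\nu(1)\,\widetilde{\theta^{\times n}}\Inf_{S_n}^{S_m\wr S_n}\!\chi^\nu$ to force $\xi$ into the stated form, and then evaluates the sum via the coset-wise orthogonality relation $\sum_{k\in B}|\widetilde{\theta^{\times n}}(k\,;g)|^2=|B|$ of Lemma 8.14(c). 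You instead treat the coset averages as a single function $\phi_\xi$ on $S_n$, verify it is a class function, and identify it by pairing against each $\chi^\nu$, which collapses to ordinary first orthogonality in $S_m\wr S_n$ together with the irreducibility of $\widetilde{\theta^{\times n}}\Inf_{S_n}^{S_m\wr S_n}\!\chi^\nu$ (legitimately quoted from the introduction; both proofs need this input). What your route buys is elementariness --- no appeal to Isaacs Chapter 8 or to the Gallagher decomposition --- at the cost of two extra checks the paper avoids: the conjugation-invariance of $\phi_\xi$, and the reality of $\widetilde{\theta^{\times n}}$ needed to insert the complex conjugate (harmless here, since symmetric-group characters and the cycle-product formula of \cite[4.3.9]{JK} make all values integers, but it is the one place your argument uses something special about $S_m$ that the paper's does not). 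One pedantic point you leave implicit: to conclude $\phi_\xi=\chi^\mu$ rather than a sum of several $\chi^\nu$, you need that $\xi$ determines $\mu$ uniquely, i.e.\ that $\widetilde{\theta^{\times n}}\Inf_{S_n}^{S_m\wr S_n}\!\chi^{\nu_1}=\widetilde{\theta^{\times n}}\Inf_{S_n}^{S_m\wr S_n}\!\chi^{\nu_2}$ forces $\nu_1=\nu_2$; this is immediate (evaluate at $(1,\ldots,1;g)$ and divide by $\theta(1)^{c(g)}\neq 0$) and is equally implicit in the paper's own proof, so it is not a gap worth worrying about.
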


\begin{proof}
Suppose that the left-hand side is non-zero. The character
$\widetilde{\theta^{\times n}}$ of $S_m \wr S_n$ restricts to
the irreducible character $\theta \times \cdots \times \theta$ of $B$.
Hence,
by  \cite[Lemma 8.14(b)]{IsaacsChars}, applied with $G = S_m \wr S_n$
and $N = B$, we have
$\left<\Res_B \xi, \Res_B \widetilde{\theta^{\times n}}\right> \not=0$.
It follows by Frobenius Reciprocity that $\xi$ is a constituent of
\[ \Ind_{B}^{S_m \wr S_n} (\theta \times \cdots \times \theta) = 
\sum_{\nu} \chi^\nu(1) \widetilde{\theta^{\times n}} \Inf_{S_n}^{S_m \wr S_n}\!\chi^\nu ,\]
where the sum is over all partitions $\nu$ of $n$.
Since $\xi$ is irreducible 
we must have $\xi =  \widetilde{\theta^{\times n}}\Inf_{S_n}^{S_m \wr S_n}\!\chi^\nu$ for some 
$\nu$. Therefore the left-hand side in the lemma is  
\[ \frac{\chi^\nu(g)}{|B|} \sum_{k \in B}
\bigl( \widetilde{\theta^{\times n}}(k\, ;g) \bigr)^2 \]
which is equal to $\chi^\nu(g)$ by \cite[Lemma 8.14(c)]{IsaacsChars}.
\end{proof}

By Definition~\ref{def:def}, 
we have $\Defres^\theta_{S_n}
(\widetilde{\theta^{\times n}} \Inf_{S_n}^{S_m \wr S_n}\!\chi^\nu)(g) = 
\chi^\nu(g)$ for all $g\in S_n$. The next proposition therefore
follows immediately from Lemma~\ref{lemma:sum}. 

\begin{proposition}\label{prop:deflation}
Let $m$, $n \in \N$, let $\theta$ be an irreducible
character of $S_m$, and let $\psi$ be a character of $S_m \wr S_n$.
If $g \in S_n$ then
\begin{flalign*}
    &&(\Def^\theta_{S_n} \psi)(g) = 
\frac{1}{|B|} \sum_{k \in B} 
\psi (k\, ;g) \widetilde{\theta^{\times n}} (k\,;g). && \qed  
\end{flalign*}
\end{proposition}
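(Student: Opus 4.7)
The plan is to deduce the proposition directly from Lemma~\ref{lemma:sum} by linearity, exactly as the sentence preceding the statement indicates. Both sides of the claimed identity are linear in $\psi$, so it suffices to verify the formula in the case that $\psi = \xi$ is an irreducible character of $S_m \wr S_n$.

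First I would split into two cases according to the structure of $\xi$. If $\xi = \widetilde{\theta^{\times n}}\Inf_{S_n}^{S_m \wr S_n}\!\chi^\nu$ for some partition $\nu$ of $n$, then by Definition~\ref{def:def} the left-hand side equals $(\Def_{S_n}^\theta \xi)(g) = \chi^\nu(g)$, while Lemma~\ref{lemma:sum} tells us that the averaged sum on the right-hand side also equals $\chi^\nu(g)$. If $\xi$ is not of this form, then Definition~\ref{def:def} gives $\Def_{S_n}^\theta \xi = 0$, so the left-hand side vanishes, and Lemma~\ref{lemma:sum} asserts that the right-hand side vanishes as well.

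Having matched both sides on irreducible characters, I would then extend by linearity: writing a general character $\psi$ of $S_m \wr S_n$ as $\psi = \sum_\xi c_\xi \xi$ with the sum over $\xi \in \Irr(S_m \wr S_n)$, both $\Def_{S_n}^\theta \psi$ and the function $g \mapsto |B|^{-1}\sum_{k \in B} \psi(k;g)\widetilde{\theta^{\times n}}(k;g)$ decompose as the corresponding sums of the associated quantities for the $\xi$'s, so the identity carries over.

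There is no real obstacle here, as all the substantive content has already been packaged into Lemma~\ref{lemma:sum}; the proposition is essentially a book-keeping consequence of it together with the definition of $\Def_{S_n}^\theta$. The only mild care needed is to note that Lemma~\ref{lemma:sum} is stated for irreducible $\xi$, which is why the reduction to irreducibles via linearity is the pivot of the argument.
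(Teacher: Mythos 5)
Your proposal is correct and matches the paper's own argument: the paper likewise notes that $(\Def^\theta_{S_n}(\widetilde{\theta^{\times n}} \Inf_{S_n}^{S_m \wr S_n}\!\chi^\nu))(g) = \chi^\nu(g)$ by Definition~\ref{def:def} and then deduces the proposition immediately from Lemma~\ref{lemma:sum} by linearity over the irreducible constituents.
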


\begin{corollary}\label{cor:deflation}
Let $m$, $n \in \N$, let $\theta$ be an irreducible
character of $S_m$, and let $\psi$ be a character of $S_{m} \wr S_n$.
If  $g \in S_n$ is an $n$-cycle then
\[ (\Def^\theta_{S_n} \psi)(g) =
\frac{1}{m!} \sum_{h \in S_m} \psi (h,1,\ldots,1;g)\theta(h).\]
\end{corollary}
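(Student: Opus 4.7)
The plan is to apply Proposition~\ref{prop:deflation} and then collapse the sum over $B = S_m \times \cdots \times S_m$ to a sum over a single copy of $S_m$, exploiting the hypothesis that $g$ is an $n$-cycle.

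The central observation I would establish is that, for $g$ an $n$-cycle, every element $(h_1, \ldots, h_n; g) \in S_m \wr S_n$ is $B$-conjugate to $(h, 1, \ldots, 1; g)$, where $h$ is the \emph{cycle product} $h_1 h_{g^{-1}(1)} h_{g^{-2}(1)} \cdots h_{g^{-(n-1)}(1)}$. To verify this I would first compute, from the conventions set up before Lemma~\ref{lemma:sum}, that conjugation by $(k_1, \ldots, k_n; 1) \in B$ sends $(h_1, \ldots, h_n; g)$ to $(h_1', \ldots, h_n'; g)$ with $h_\ell' = k_\ell h_\ell k_{g^{-1}(\ell)}^{-1}$. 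Then, starting from $k_1 = 1$ and iterating along the $g$-orbit $1 \to g(1) \to g^2(1) \to \cdots$, one recursively solves for the $k_\ell$ so that $h_\ell' = 1$ for every $\ell \neq 1$, and the remaining entry $h_1'$ works out to be precisely the stated cycle product.

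Since $\psi$ and $\widetilde{\theta^{\times n}}$ are class functions, this conjugacy yields $\psi(h_1, \ldots, h_n; g) = \psi(h, 1, \ldots, 1; g)$, and similarly for $\widetilde{\theta^{\times n}}$. A short trace computation on the representation $V^{\otimes n}$ affording $\widetilde{\theta^{\times n}}$ then shows that, because $g$ is a single $n$-cycle, the only standard basis vectors of $V^{\otimes n}$ contributing to the diagonal of $(h, 1, \ldots, 1; g)$ are the constant tensors $e_i \otimes \cdots \otimes e_i$, giving $\widetilde{\theta^{\times n}}(h, 1, \ldots, 1; g) = \Tr(h) = \theta(h)$.

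The final ingredient is a fibre count: for each fixed $h \in S_m$, the number of tuples $(h_1, \ldots, h_n) \in B$ with cycle product equal to $h$ is exactly $(m!)^{n-1}$, since $(h_\ell)_{\ell \neq 1}$ may be chosen arbitrarily and then $h_1$ is uniquely determined. Substituting these observations into Proposition~\ref{prop:deflation} and cancelling $(m!)^{n-1}/(m!)^n = 1/m!$ yields the claimed identity. The main technical step is pinning down the $B$-conjugation formula correctly in the paper's specific convention for the wreath product action; once that is in place, the counting and trace calculations are routine.
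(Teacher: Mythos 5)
Your proposal is correct and follows essentially the same route as the paper: reduce to Proposition~\ref{prop:deflation}, collapse the sum over the base group by conjugating each $(h_1,\ldots,h_n;g)$ to a representative $(h,1,\ldots,1;g)$ with $h$ the cycle product, count the fibres as $(m!)^{n-1}$, and evaluate $\widetilde{\theta^{\times n}}(h,1,\ldots,1;g)=\theta(h)$. The only difference is that you prove the conjugacy criterion and the character value directly, where the paper cites \cite[4.2.8]{JK} and \cite[Lemma 4.3.9]{JK}; your explicit $B$-conjugation and trace computations are correct under the paper's conventions.
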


\begin{proof}
Suppose that $g$ is the $n$-cycle $(x_1 \, x_2 \, \ldots \, x_n)$. 
By~\cite[4.2.8]{JK}, the permutations $(h_1,\ldots,h_n;g)$ and
$(h_1',\ldots,h_n';g) \in S_m \wr S_n$ are conjugate
in $S_m \wr S_n$ if and only if 
the elements $h_{x_n} h_{x_{n-1}}\ldots h_{x_1}$ and   $h'_{x_n} h'_{x_{n-1}}\ldots h'_{x_1}$ are conjugate in $S_m$.
In particular, each conjugacy class of $S_m \wr S_n$ 
which meets $\{(k\,; g) : k \in B\}$ has 
a representative of the form $(h,1,\ldots,1;g)$. Moreover,
the number of elements $(h_1,h_2,\ldots,h_n;g)$ conjugate
to $(h,1,\ldots,1;g)$ is
$m!^{n-1} |h^{S_m}|$,
since $h_2, \ldots, h_n$ may be chosen arbitrarily, 
and then 
$h_1$ must be chosen so that 
$h_{x_n} h_{x_{n-1}}\cdots h_{x_1} \in h^{S_m}$.
It follows that
\begin{align*}
\sum_{k \in B} \psi(k\, ;g) \widetilde{\theta^{\times n}}(k\, ;g)  &= 
m!^{n-1} \sum_{h \in S_m}
 \psi (h,1,\ldots,1;g) \widetilde{\theta^{\times n}}(h,1,\ldots
,1;g) \\
&= m!^{n-1} \sum_{h \in S_m} \psi (h,1,\ldots,1;g) \theta(h)
\end{align*}
where the second equality uses~\cite[Lemma 4.3.9]{JK}. 
Now apply Proposition~\ref{prop:deflation} to the left-hand side.
\end{proof}

The following definition and lemma allow for a more convenient
statement of Corollary~\ref{cor:deflation}.

\begin{definition}\label{def:omega}
Let $m$, $n \in \N$, let $g \in S_n$ be
an $n$-cycle, and
let $\psi$ be a character of $S_m \wr S_n$. We define
$\omega(\psi)$ to be the 
 class function on $S_m$ such that
\[ \omega(\psi) (h) = \psi(h,1,\ldots,1;g)  \]
for all $h \in S_m$.
\end{definition}

\begin{lemma}\label{lemma:cycles}
Let $m$, $n \in \N$. If $g\in S_n$ is an $n$-cycle
and $h \in S_m$ has cycle
type $\alpha$ 
then $(h,1,\ldots,1;g) \in S_m \wr S_n$ has cycle
type $n\alpha$.
\end{lemma}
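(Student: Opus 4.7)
The plan is to compute the cycle decomposition of $\pi := (h, 1, \ldots, 1; g)$ on $\{1, \ldots, m\} \times \{1, \ldots, n\}$ directly, by tracing the $\pi$-orbit of each point of the form $(i, 1)$. By the wreath-product convention in force, $\pi$ sends $(i, j)$ to $(h_{g(j)} i, g(j))$, and since $h_k = 1$ whenever $k \ne 1$, the first coordinate is unchanged except at the step when $g(j) = 1$.

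Because $g$ is an $n$-cycle, the iterates $g(1), g^2(1), \ldots, g^{n-1}(1)$ exhaust $\{2, \ldots, n\}$ and $g^n(1) = 1$. I therefore expect to find that applying $\pi$ successively to $(i, 1)$ first cycles through the pairs $(i, g^s(1))$ for $s = 1, \ldots, n-1$ with no change to the first coordinate, and that the $n$-th application returns the second coordinate to $1$ while multiplying the first by $h$. This yields $\pi^n(i, 1) = (h i, 1)$, and by iteration $\pi^{kn}(i, 1) = (h^k i, 1)$.

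Let $\ell$ denote the length of the $h$-cycle containing $i$. The previous step then shows that the $\pi$-orbit of $(i, 1)$ has size exactly $n\ell$ and consists of the pairs $(h^k i, j)$ with $0 \le k < \ell$ and $1 \le j \le n$. Moreover, every point $(i', j')$ of the domain lies in the $\pi$-orbit of $(i', 1)$ (by taking the unique $s$ with $g^s(1) = j'$), so these orbits partition $\{1, \ldots, m\} \times \{1, \ldots, n\}$. Each cycle of $h$ of length $\alpha_s$ thus contributes a single $\pi$-cycle of length $n\alpha_s$, and the cycle type of $\pi$ is $n\alpha$, as required. The argument is a direct unwinding of the wreath-product action and the definition of an $n$-cycle, so no step should present a substantial obstacle.
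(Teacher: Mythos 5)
Your proof is correct and takes essentially the same approach as the paper, which merely observes that it suffices to show each orbit $\mathcal{O}$ of $h$ on $\{1,\ldots,m\}$ yields the orbit $\mathcal{O}\times\{1,\ldots,n\}$ of $(h,1,\ldots,1;g)$ and leaves the verification ``to the reader as an easy exercise.'' Your explicit orbit-tracing computation is precisely that omitted verification, carried out correctly under the paper's convention $(i,j)\mapsto(h_{gj}\?i,gj)$.
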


\begin{proof}
It suffices to show that if $\mathcal{O}$ is an orbit
of $h$ on $\{1,2,\ldots,m\}$ then $\mathcal{O} \times \{1,\ldots,n\}$
is an orbit of $(h,1,\ldots,1;g)$ in its action on 
$\{1,\ldots,m\} \times \{1,\ldots,n\}$.
We leave this to the reader as an easy exercise. 
\end{proof}

The next proposition follows easily from
 Lemma~\ref{lemma:cycles} and
Corollary~\ref{cor:deflation}.

\vbox{
\begin{proposition}\label{prop:omega}
Let $m$, $n \in \N$, and let $\chi$ be a character of $S_{mn}$. 

\begin{thmlist}
\item
If
$\alpha$ is a partition of $m$ then
\[ \omega(\Res_{S_m \wr S_n} \chi) (g_\alpha) = \chi(g_{n\alpha}).\]
\item If $\theta$ is an irreducible
character of $S_m$ and $g \in S_n$ is an $n$-cycle then
\begin{align*} (\Defres^\theta_{S_n} \chi)(g) &= 
\left< \omega(\Res_{S_m \wr S_n} \chi), \theta  \right> \\
&= \sum_{\alpha} 
\frac{\chi(g_{n\alpha})}{z_\alpha} \theta(g_\alpha) \end{align*}
where the sum is over all partitions $\alpha$ of $m$.\hfill$\qed$
\end{thmlist}
\end{proposition}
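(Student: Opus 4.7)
The plan is to deduce both parts directly from the results already in place, without any further combinatorial or representation-theoretic input.

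For part (i), I would simply unpack Definition~\ref{def:omega}: with $g \in S_n$ a fixed $n$-cycle,
\[ \omega(\Res_{S_m \wr S_n}\chi)(g_\alpha) = \chi\bigl((g_\alpha,1,\ldots,1\,;g)\bigr). \]
By Lemma~\ref{lemma:cycles}, the permutation $(g_\alpha,1,\ldots,1\,;g)$ of the $mn$-element set has cycle type $n\alpha$, so it is $S_{mn}$-conjugate to $g_{n\alpha}$; since $\chi$ is a class function on $S_{mn}$, the right-hand side equals $\chi(g_{n\alpha})$.

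For the first equality in part (ii), I would apply Corollary~\ref{cor:deflation} with $\psi = \Res_{S_m \wr S_n}\chi$ to obtain
\[ (\Defres^\theta_{S_n}\chi)(g) = \frac{1}{m!}\sum_{h \in S_m}\omega(\Res_{S_m \wr S_n}\chi)(h)\,\theta(h). \]
Since $\theta$, being a character of $S_m$, is rational- (hence real-) valued, this sum is precisely the inner product $\langle \omega(\Res_{S_m \wr S_n}\chi), \theta\rangle_{S_m}$. That $\omega(\psi)$ really is a class function on $S_m$ follows from the conjugacy criterion quoted in the proof of Corollary~\ref{cor:deflation}: $(h,1,\ldots,1\,;g)$ and $(h',1,\ldots,1\,;g)$ are conjugate in $S_m \wr S_n$ precisely when $h$ and $h'$ are conjugate in $S_m$.

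For the second equality, I would collect the sum over $h \in S_m$ by $S_m$-conjugacy class: for each partition $\alpha$ of $m$, the class of $g_\alpha$ has $m!/z_\alpha$ elements, and by part~(i) each such $h$ contributes $\chi(g_{n\alpha})\theta(g_\alpha)$. Dividing by $m!$ yields the claimed formula $\sum_\alpha \chi(g_{n\alpha})\theta(g_\alpha)/z_\alpha$. I do not anticipate any genuine obstacle here: every step is a direct substitution or an application of a result already proved, and the only item that might warrant its own sentence is the verification that $\omega$ of a character is a class function, which drops out of the wreath-product conjugacy criterion already in use.
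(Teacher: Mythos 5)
Your proof is correct and follows exactly the route the paper intends: the paper gives no separate proof, stating only that the proposition ``follows easily from Lemma~\ref{lemma:cycles} and Corollary~\ref{cor:deflation}'', and your argument supplies precisely those deductions (unwinding Definition~\ref{def:omega} plus the cycle-type computation for part (i), and Corollary~\ref{cor:deflation} followed by summing over conjugacy classes for part (ii)). Your extra remark that $\omega(\psi)$ is a genuine class function, via the wreath-product conjugacy criterion, is a worthwhile detail the paper leaves implicit in Definition~\ref{def:omega}.
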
}

The character
value $\chi(g_{n\alpha})$ in part (i) is the subject of
Theorem~\ref{thm:ind}; we shall see that
combining this theorem with part (ii) gives
Equation~\eqref{eq:omegares} in \S\ref{sec:proof} below. 
Note also that part~(ii) of the proposition
implies Equation~\eqref{eq:cycle_deflation} in~\S 2.

\section{Skew characters}
\label{sec:skew}

In this section we state and prove a result on the values of skew characters on elements of the form $g_{n\alpha}$ (Theorem~\ref{thm:ind}).
First, we give the necessary combinatorial definitions. In several arguments we shall refer to James' abacus notation for partitions, as described in \cite[page 78]{JK}.

\subsection{Quotients of skew-partitions}
\label{subsec:quotients}

We shall define $n$-quotients and $n$-signs
for the following class of skew-partitions.

\begin{definition}\label{def:n_dec}
Let $m$, $n \in \N$. We say that a skew-partition $\lambda /\mu$ of $mn$
is \emph{$n$-decomposable} if there
exists a border-strip tableau of shape $\lambda / \mu$   
and type~$(n^m)$.
\end{definition}

\begin{definition}\label{def:n_quo}
Let $m$, $n \in \N$ and let $\lambda / \mu$ be an $n$-decomposable
skew-partition of $mn$. Let $\Gamma(\lambda)$ be an abacus display
for $\lambda$ on an $n$-runner abacus
using $tn$ beads for some $t \in \N$. Let
$\Gamma(\mu)$ be the abacus display for $\mu$
obtained by performing an appropriate sequence of $m$ upward
bead moves on $\Gamma(\lambda)$. 
 (This is possible since $\lambda / \mu$ is $n$-decomposable.)
Let
$(\lambda^{(0)}, \ldots, \lambda^{(n-1)})$ and
$(\mu^{(0)}, \ldots, \mu^{(n-1)})$ be the $n$-quotients
of $\lambda$ and $\mu$ corresponding to
$\Gamma(\lambda)$ and $\Gamma(\mu)$, respectively.
The \emph{$n$-quotient} of $\lambda / \mu$ is defined to be
\[ (\lambda^{(0)} / \mu^{(0)}, \ldots, \lambda^{(n-1)} / \mu^{(n-1)}). \]
We define the $n$-\emph{sign}
of $\lambda / \mu$, denoted $\nsgn_n(\lambda/\mu)$ to be the sign of any border-strip tableau
of shape $\lambda / \mu$ and type $(n^m)$. 
\end{definition}

To avoid cumbersome restatements,
we adopt the convention that $\lambda^{(i)} / \mu^{(i)}$ always
has the meaning of Definition~\ref{def:n_quo} above.		
It is clear from the abacus that $\mu^{(i)}$ is a subpartition
of $\lambda^{(i)}$ for each $i \in \{0,\ldots,n-1\}$, and
so the $n$-quotient is well-defined. It follows
from Proposition~3.13 in~\cite{OlssonCombinatorics}, or our
Proposition~\ref{prop:nbij} below, that
the $n$-sign of a skew-partition is well defined. See \S\ref{subsec:example}
below for an example of these definitions and all the results in this section.

We remark that it appears to be impossible
to define the $n$-core of an arbitrary skew-partition.
The example $\lambda / \mu = (2,2) / (1)$
and $n=2$ illustrates the obstacles that arise.
Representing $\lambda$ on a $2$-runner abacus as
\[ \begin{matrix} \wb & \wb \\ \bb & \bb \end{matrix} \]
we see that either bead may be moved up,
giving two different skew-partitions from
which no border strip of length $2$ can be removed, namely $(2) /(1)$ 
and $(1,1) / (1)$. 
The $2$-quotients corresponding to these bead moves,
namely $((1),\emptyset)$ and
$(\emptyset,(1))$, are also different.

\begin{theorem}[Farahat]\label{thm:ind}
Let $m$, $n \in \mathbf{N}$ and let $\lambda / \mu$ be
a skew-partition of $mn$. Let $\alpha$ be a partition of $m$.
If~$\lambda / \mu$ is not
$n$-decomposable then $\chi^{\lambda / \mu}(g_{n \alpha}) = 0$.
If~$\lambda / \mu$ is $n$-decomposable and
$(\lambda^{(0)} / \mu^{(0)}, \ldots, \lambda^{(n-1)} / \mu^{(n-1)})$ is its $n$-quotient, 
then
\[
\chi^{\lambda / \mu}(g_{n\alpha}) = \nsgn_n(\lambda / \mu)
\Ind^{S_m}_{S_{\ell_0} \times \cdots \times S_{\ell_{n-1}}}
\bigl(
\chi^{\lambda^{(0)} / \mu^{(0)}} \times \cdots \times 
\chi^{\lambda^{(n-1)}/ \mu^{(n-1)}} \bigr)
(g_\alpha) \]
where $|\lambda^{(i)} / \mu^{(i)}| = \ell_i$. 
\end{theorem}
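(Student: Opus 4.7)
The plan is to expand both sides of the identity as sums via the Murnaghan--Nakayama rule and then exhibit an explicit, sign-preserving bijection between the two sets of combinatorial objects indexing these sums. The natural bookkeeping tool for this is the $n$-abacus display of a partition: the $n$-quotient components $\lambda^{(i)}/\mu^{(i)}$ correspond to bead configurations on the $n$ individual runners, and an $n$-decomposable skew-partition is exactly one for which $\mu$ is obtained from $\lambda$ by moves within runners. Throughout, I would write $\alpha=(\alpha_1,\ldots,\alpha_r)$ so that iterating Murnaghan--Nakayama expresses the left-hand side as
\[ \chi^{\lambda/\mu}(g_{n\alpha}) = \sum_T \sgn(T), \]
the sum running over border-strip tableaux $T$ of shape $\lambda/\mu$ whose successive strips have lengths $n\alpha_1,n\alpha_2,\ldots,n\alpha_r$.

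For the right-hand side, I would apply the Frobenius formula for an induced character at $g_\alpha$ to get
\[ \Ind^{S_m}_{S_{\ell_0}\times\cdots\times S_{\ell_{n-1}}}\bigl(\chi^{\lambda^{(0)}/\mu^{(0)}}\times\cdots\times\chi^{\lambda^{(n-1)}/\mu^{(n-1)}}\bigr)(g_\alpha) = \sum_c \prod_{i=0}^{n-1} \chi^{\lambda^{(i)}/\mu^{(i)}}(g_{\alpha^{(i)}}), \]
where the outer sum is over all assignments $c$ of the cycles of $g_\alpha$ to the $n$ factors, and $\alpha^{(i)}$ is the multiset of cycle lengths assigned to the $i$th factor. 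Applying Murnaghan--Nakayama to each factor then rewrites the right-hand side as a sum over tuples $(c,T_0,\ldots,T_{n-1})$ in which each $T_i$ is a border-strip tableau of shape $\lambda^{(i)}/\mu^{(i)}$ whose strip lengths are precisely the cycle lengths assigned to runner $i$.

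The bijection between the two indexing sets is driven by the classical fact that removing a border strip of length $nk$ from a partition corresponds, on the $n$-abacus, to sliding a single bead up by $k$ positions on a single runner; equivalently, any such strip lives entirely on one ``runner'' of the rim. Thus each strip $\lambda^{j}/\lambda^{j-1}$ in $T$ determines a runner $c(j)$, and its removal is recorded on runner $c(j)$ as a length-$\alpha_j$ strip in the corresponding quotient component. Running this for all $j$ produces the tuple $(c,T_0,\ldots,T_{n-1})$, and the inverse is obtained by reconstructing bead motions on the abacus. The same picture proves the vanishing assertion: if some bead is forced to change runners to transform $\lambda$ into $\mu$, then no tableau $T$ exists and both sides vanish.

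The main technical obstacle is the sign check, namely verifying that
\[ \sgn(T) \;=\; \nsgn_n(\lambda/\mu)\cdot\prod_{i=0}^{n-1}\sgn(T_i) \]
for every $T$ in the left-hand sum, with the crucial feature that the correction factor depends only on $\lambda/\mu$ and not on the particular tableau. I would handle this by computing, for an individual strip of length $nk$ on runner $i$, the difference between its height on the full Young diagram of $\lambda^{j-1}$ and the height of the associated length-$k$ strip on runner $i$; this difference is controlled by the positions of beads on the other runners that are jumped over during the bead slide. Summing these per-strip contributions over the whole sequence causes cancellation, leaving only a global quantity depending on the net bead displacements between $\mu$ and $\lambda$, which is precisely the invariant taken as the definition of $\nsgn_n(\lambda/\mu)$ in \S4.1. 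Once the single-strip sign identity is proved, the general statement follows by multiplicativity, and combining the bijection with the sign identity gives the theorem.
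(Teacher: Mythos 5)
Your proposal follows essentially the same route as the paper: both sides are expanded by the Murnaghan--Nakayama rule (the right-hand side via the standard fixed-tabloid formula for induced characters, which distributes the cycles of $g_\alpha$ among the $n$ factors), and the two sums are matched by the abacus bijection sending a border strip of length $nk$ in $\lambda/\mu$ to a length-$k$ strip on a single runner, i.e.\ in one quotient component. The sign verification you sketch is exactly the step the paper formalizes by recording each bead move as a cycle of length $h+1$ in the symmetric group on the naturally numbered beads and factoring the resulting product through the relabelling permutation, which supplies the tableau-independent factor $\nsgn_n(\lambda/\mu)$.
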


This result, stated in the alternative language of star diagrams, was first proved in \cite[Section~4]{Farahat1954}. The special case where $\mu$ is the 
$n$-core of $\lambda$
also follows from the correction by Thrall and Robinson \cite{ThrallRobinson1951} to Section 7 of
Robinson \cite{Robinson1947} or, alternatively, from Littlewood's result in~\cite[Section 2]{Littlewood1951}. 
Farahat's proof depends on an algebraic argument using Schur functions. 
A character-theoretic proof is given by Kerber, S{\"a}nger, and Wagner: see~\cite[Equation 3.6]{KSW}. 
In the remainder of this section, we give a shorter character-theoretic proof of Theorem~\ref{thm:ind}, expressing each side of the theorem as a sum, 
and then constructing a bijection between the summands. An example to illustrate this bijection is given in \S \ref{subsec:example}. 
(Our bijection is similar to that defined using \emph{Brettspiele} in~\cite{KSW}.)

\medskip
\enlargethispage{-6pt}

\subsection{A model for induction from a Young subgroup}
\label{subsec:model}

The following general result on the values of a character induced from a 
Young subgroup will be used in the proof of Theorem~\ref{thm:ind}.
(The notation is chosen to be consistent with this later use.)

\begin{lemma}\label{lemma:model}
Let $(\ell_0,\ldots,\ell_{n-1})$ be a composition of $m \in \N$. 
For each $i \in \{0,\ldots,n-1\}$, let
$\theta_i$ be a character of $S_{\ell_i}$. If $g\in S_m$
then
\[ \Ind^{S_m}_{S_{\ell_0} \times \cdots \times S_{\ell_{n-1}}} 
(\theta_0 \times \cdots \times \theta_{n-1})(g)
=
\sum_{\mathbf{t}} \theta_0(g_{\alpha_0(\mathbf{t})}) \ldots
\theta_{n-1}(g_{\alpha_{n-1}(\mathbf{t})}) 
\]
where the sum is over all $(\ell_0,\ldots,\ell_{n-1})$-tabloids 
$\mathbf{t}$ such that
$g \mathbf{t} = \mathbf{t}$, and $\alpha_i(\mathbf{t})$ is the cycle type of
the permutation induced by $g$ on
the entries of
 row $i+1$ of $\mathbf{t}$.
\end{lemma}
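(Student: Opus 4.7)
The plan is to apply the standard Frobenius formula for induced characters and then identify the surviving cosets with the $g$-fixed tabloids. Writing $H = S_{\ell_0} \times \cdots \times S_{\ell_{n-1}}$ and $\theta = \theta_0 \times \cdots \times \theta_{n-1}$, the formula
\[ \Ind^{S_m}_H \theta(g) = \frac{1}{|H|} \sum_{\substack{x \in S_m \\ x^{-1} g x \in H}} \theta(x^{-1} g x) \]
reduces, since $\theta$ is a class function on $H$ and the condition $x^{-1} g x \in H$ depends only on the left coset $xH$, to the sum over cosets
\[ \Ind^{S_m}_H \theta(g) = \sum_{\substack{xH \\ x^{-1} g x \in H}} \theta(x^{-1} g x). \]

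Next I would use the standard bijection between left cosets $xH$ of the Young subgroup $H$ in $S_m$ and tabloids $\mathbf{t}$ of shape $(\ell_0,\ldots,\ell_{n-1})$: namely, to $xH$ associate the tabloid whose $(i{+}1)$-st row is the set $\{x(\ell_0 + \cdots + \ell_{i-1} + j) : 1 \le j \le \ell_i\}$. Under this bijection the stabilizer of $\mathbf{t}$ in $S_m$ is precisely $xHx^{-1}$, so the condition $x^{-1} g x \in H$ is equivalent to $g \mathbf{t} = \mathbf{t}$. This matches the indexing set on the right-hand side.

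Finally, for each such $xH$, I would observe that $x^{-1} g x \in H$ decomposes as $(h_0, \ldots, h_{n-1})$ where $h_i \in S_{\ell_i}$ is the permutation obtained by conjugating (via $x$) the restriction of $g$ to row $i+1$ of $\mathbf{t}$ to the standard block $\{\ell_0 + \cdots + \ell_{i-1} + 1, \ldots, \ell_0 + \cdots + \ell_i\}$. In particular $h_i$ has cycle type $\alpha_i(\mathbf{t})$, so $\theta_i(h_i) = \theta_i(g_{\alpha_i(\mathbf{t})})$, and the product character gives
\[ \theta(x^{-1} g x) = \theta_0(g_{\alpha_0(\mathbf{t})}) \cdots \theta_{n-1}(g_{\alpha_{n-1}(\mathbf{t})}), \]
proving the lemma.

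No step looks genuinely difficult: the only thing to handle carefully is the coset/tabloid dictionary and the observation that the cycle type of $h_i$ really is $\alpha_i(\mathbf{t})$ (which is immediate because conjugation by $x$ is a relabelling of the underlying set).
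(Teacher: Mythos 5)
Your proposal is correct and follows essentially the same route as the paper's proof: the standard coset form of the induced character formula, the identification of left cosets of the Young subgroup with tabloids (the paper phrases this via representatives $x_j$ with $\mathbf{t}_j = x_j\mathbf{s}$), and the observation that conjugation carries the orbits of $g$ on row $i+1$ of $\mathbf{t}$ to those of $x^{-1}gx$ on the corresponding standard block, so the cycle type is $\alpha_i(\mathbf{t})$. No gaps.
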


\begin{proof}
Let $\mathbf{t}_1$, \ldots, $\mathbf{t}_N$ 
be the $(\ell_0,\ldots,\ell_{n-1})$-tabloids. Let $\mathbf{s}$ be 
a $(\ell_0,\ldots,\ell_{n-1})$-tabloid fixed by the Young subgroup
$S_{\ell_0} \times \cdots \times S_{\ell_{n-1}}$.
For each $j$ such that $1 \le j \le N$, choose $x_j \in S_m$
such that $\mathbf{t}_j =  x_j \mathbf{s}$. Let
$\theta = \theta_0 \times \cdots \times \theta_{n-1}$. 
For each $g\in S_m$ we have
\[ \bigl( 
\Ind^{S_m}_{S_{\ell_0} \times \cdots \times 
S_{\ell_{n-1}}} \theta \bigr)
(g)
= 
\sum_j \theta (x_j^{-1}gx_j) \] 
where the sum is over all $j$ such that
\[ x_j^{-1}gx_j \in S_{\ell_0} \times \cdots \times S_{\ell_{n-1}}, \] 
or, equivalently, over all $j$ such
that $g \mathbf{t}_j  = \mathbf{t}_j$. If
$\Delta_1, \ldots, \Delta_q$ 
are the orbits of~$g$ on row $i+1$ 
of  $\mathbf{t}_j$, then
$x_j^{-1}\Delta_1, \ldots, x_j^{-1}\Delta_q$ 
are the orbits of $x_j^{-1}gx_j$ on 
row $i+1$ of 
$\mathbf{s}$.
Hence $x_j^{-1}gx_j$ acts with cycle type $\alpha_i(\mathbf{t}_j)$
on row $i+1$ of $\mathbf{s}$ and so
\[ \theta(x_j^{-1}gx_j) = \theta_0(g_{\alpha_0(\mathbf{t_j})})
\ldots \theta_{n-1}(g_{\alpha_{n-1}(\mathbf{t_j})}).
\]
The lemma follows.
\end{proof}

\subsection{Proof of Theorem~\ref{thm:ind}}\label{subsec:cfs}
\label{subsec:omega_proof}

Let $m$, $n \in \N$ and let $\lambda / \mu$ be a skew-partition
of~$mn$. Let $\alpha$ be a partition of $m$. 
If there is a 
border-strip tableau of shape $\lambda /\mu$ and type $n\alpha$
then it is clear from the abacus that  $\lambda / \mu$ is $n$-decomposable. 
Hence if $\lambda / \mu$
is not $n$-decomposable then, by the Murnaghan--Nakayama rule,
$\chi^{\lambda/\mu}(g_{n \alpha}) = 0$. 

We may therefore assume that $\lambda / \mu$ is $n$-decomposable.
Let $\ell_i = |\lambda^{(i)} / \mu^{(i)}|$ for
each $i \in \{0,\ldots,n-1\}$ and let
$H = {S_{\ell_0} \times \cdots \times S_{\ell_{n-1}}}$.
To show that
\begin{equation} \label{eq:charmain}
 \chi^{\lambda / \mu}(g_{n\alpha}) = \nsgn_n(\lambda / \mu)
\Ind^{S_m}_H
\bigl(
\chi^{\lambda^{(0)} / \mu^{(0)}} \times \cdots 
\times \chi^{\lambda^{(n-1)} / \mu^{(n-1)}} \bigr)
(g_\alpha),  \end{equation}
we shall use the following generalization of border-strip tableaux.

\begin{definition}\label{def:n_bs} Let $m$, $n \in \N$.
Let $\lambda / \mu$ be an $n$-decomposable skew-partition 
and let $\alpha = (\alpha_1,\ldots,\alpha_k)$ be a composition of~$m$.
A \emph{$n$-quotient border-strip tableau} of \emph{shape $\lambda / \mu$}
and \emph{type} $\alpha$ is an $n$-tuple $(T_0, \ldots, T_{n-1})$
of border-strip tableaux such that

(a) for each $i \in \{0, \ldots, n-1\}$, the shape of $T_i$ is $\lambda^{(i)} 
/ \mu^{(i)}$, \emph{and}

(b) for each $j \in \{1,\ldots,k\}$, the boxes in the $T_i$ labelled
$j$ lie in a single tableau, where they form a border strip of length $\alpha_j$.
\end{definition}

By the Murnaghan--Nakayama rule we have
\[ \chi^{\lambda / \mu}(g_{n\alpha}) = \sum \sgn(T) \]
where the sum is over all border-strip tableaux of shape
$\lambda / \mu$ and type $n\alpha$. The bijection
in the following proposition implies that
\begin{equation}
\label{eq:midpoint} \chi^{\lambda / \mu}(g_{n\alpha}) =
\nsgn_n(\lambda / \mu) \sum \sgn(T_0) \ldots \sgn(T_{n-1})\end{equation}
where the sum is over all $n$-quotient border-strip tableaux
$(T_0,\ldots,T_{n-1})$ of shape $\lambda / \mu$
and type $\alpha$. 
An illustrative example of the bijection is given in Figure~\ref{fig:bijection} in
\S\ref{subsec:example} below.

\bigskip
\begin{proposition}\label{prop:nbij}
Let $\lambda / \mu$ be an $n$-decomposable skew-partition of $mn$
and let $\alpha = (\alpha_1,\ldots,\alpha_k)$ be a partition of $m$.
There is a canonical bijection between border-strip tableaux
of shape $\lambda / \mu$ and type $n\alpha$ and
$n$-quotient border-strip tableaux of shape $\lambda / \mu$
and type $\alpha$. Under this bijection, if $T$ is mapped to $(T_0, \ldots, T_{n-1})$,
then
\[ \sgn(T) = \nsgn_n(\lambda / \mu)\, \sgn(T_0) \ldots \sgn(T_{n-1}). \]
\end{proposition}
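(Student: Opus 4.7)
The plan is to translate the problem through James' abacus. Using the displays $\Gamma(\lambda)$ and $\Gamma(\mu)$ fixed in Definition~\ref{def:n_quo}, the standard dictionary says that a border strip of length $n\alpha_j$ in $\lambda/\mu$ is the same datum as a single bead move descending $\alpha_j$ rows on one runner, and that, viewed on the single-runner abacus for $\lambda^{(i)}/\mu^{(i)}$, this is precisely a border strip of length $\alpha_j$. The bijection then writes itself: given $T = (B_1,\dots,B_k)$ of type $n\alpha$, for each $j$ let $i(j) \in \{0,\dots,n-1\}$ be the runner of the bead move producing $B_j$, and assign label $j$ to the corresponding length-$\alpha_j$ strip inside $T_{i(j)}$, inheriting the order from $T$. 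Condition (b) of Definition~\ref{def:n_bs} is automatic since each $B_j$ lies on a single runner; the inverse interleaves the moves of $T_0,\dots,T_{n-1}$ in the order $j=1,\dots,k$, and bijectivity is routine bookkeeping.

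For the sign identity I would invoke the standard abacus lemma that the height of a border strip equals the number of beads at intermediate positions of its bead move. Applied to $B_j$ this counts beads on all runners, whereas for the image strip in $T_{i(j)}$ it counts only beads on runner $i(j)$. Writing $c_j$ for the number of \emph{cross-runner} beads crossed during move $j$, multiplying over $j$ yields
\[ \sgn(T) \prod_{i=0}^{n-1}\sgn(T_i)^{-1} = (-1)^{C}, \qquad C = \sum_{j=1}^{k} c_j. \]

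The main obstacle is to identify $(-1)^{C}$ with $\nsgn_n(\lambda/\mu) = \sgn(\rho)$. I would handle this by induction on $k$, peeling off the last strip. With $\lambda' = \lambda^{k-1}$ and $\rho'$ the relabelling permutation for $\lambda'/\mu$, the single bead move producing $B_k$ induces, via the (runner, level) matching, a permutation $\pi$ of bead labels satisfying $\sgn(\rho) = \sgn(\pi)\sgn(\rho')$. Now physical bead-tracking during this single move is a cyclic permutation of sign $(-1)^{h(B_k)}$; it differs from $\pi$ only by a cyclic shuffle of the levels on runner $i(k)$, whose sign is $(-1)^{h(B_k) - c_k}$ and accounts precisely for the same-runner crossings absorbed by the level matching. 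Hence $\sgn(\pi) = (-1)^{c_k}$, and combining with the inductive hypothesis and the formula above closes the induction. The delicate step is this final comparison between the physical and the (runner, level) matchings of beads on runner $i(k)$; it is essentially the content of Proposition~3.13 of \cite{OlssonCombinatorics}, and is where a careful local analysis of what happens on the abacus at step $k$ is required.
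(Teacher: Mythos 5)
Your proposal is correct and follows essentially the same route as the paper: the bijection via the abacus dictionary is identical, and your sign argument rests on the same two facts the paper uses, namely that the physical tracking of a single bead move is a cycle of length $h+1$ and that it differs from the (runner, level) relabelling only by the level shuffle on the moved bead's runner. The only difference is bookkeeping — you peel off one strip at a time and induct on $k$ via the cross-runner count $C$, whereas the paper forms the global product $\pi=\sigma_1\cdots\sigma_k$ and restricts $\rho^{-1}\pi$ to each runner — so the two arguments are the same computation organized differently.
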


\begin{proof}
Let $T$ be a border-strip tableau of shape $\lambda / \mu$ and type $\alpha$.
The abacus gives a canonical bijection between
border strips 
in $\lambda / \mu$ 
of length $n \ell$ 
and border strips 
of length $\ell$ in the skew-partitions
$\lambda^{(i)} / \mu^{(i)}$ for $i \in \{0,\ldots,n-1\}$.
If the border strip of length $n\alpha_k$ in $T$ corresponds
to a border strip of length~$\alpha_k$ 
in $\lambda^{(i_k)} / \mu^{(i_k)}$, then we label the
corresponding boxes in the Young diagram of~$\lambda^{(i_k)} / \mu^{(i_k)}$
by $k$. Removing these border strips from the tableaux concerned and
iterating the process with the border strip of length $n\alpha_{k-1}$, and so on,
 we obtain a canonical bijection between
border-strip tableaux of shape $\lambda / \mu$ and type $n\alpha$
and $n$-quotient border-strip tableaux of shape $\lambda / \mu$ and type~$\alpha$.

It only remains to prove the assertion about signs.
Since $\nsgn_n(\lambda / \mu)$ is the common sign of any $\lambda/\mu$-tableau
of shape $\lambda/\mu$ and type $(n^m)$, 
it suffices to show that if $T$ is a $\lambda/\mu$-tableau of
type $n\alpha$ and $U$ is a $\lambda/\mu$-tableau of type $n\beta$ then
\[ \sgn(T) \sgn(U) = \sgn(T_0) \ldots \sgn(T_{n-1}) \sgn(U_0) \ldots \sgn(U_{n-1}). \]
Starting from $\Gamma(\lambda)$ with the beads numbered in order of their
positions, perform the sequence of bead moves corresponding to $T$,
then perform the inverse of the sequence of bead moves corresponding to $U$.
Let $\sigma$ be the resulting permutation of the beads.
Each time a border-strip of height $\ell$ is removed or added, the permutation
required to restore the order of numbers is an $\ell+1$-cycle. Therefore
$\sgnop \sigma = \sgn(T) \sgn(U)$.
On the other hand,~$\sigma$ permutes the beads on each runner amongst themselves, and a similar
argument shows that $\sgn(\sigma) =  \sgn(T_0) \ldots \sgn(T_{n-1}) \sgn(U_0) \ldots \sgn(U_{n-1})$.
\end{proof}

Comparing Equations~\eqref{eq:charmain} and~\eqref{eq:midpoint} we see
that 
to complete the proof of Theorem~\ref{thm:ind}, it suffices to show that
\begin{equation}
\label{eq:final} 
\sum \sgn(T_0) \ldots \sgn(T_{n-1}) =
\Ind^{S_m}_H 
\bigl(
\chi^{\lambda^{(0)} / \mu^{(0)}} \times \cdots 
\times \chi^{\lambda^{(n-1)} / \mu^{(n-1)}} \bigr) (g_\alpha) \end{equation}
where the sum is over all $n$-quotient border-strip tableaux
$(T_0,\ldots,T_{n-1})$ of shape $\lambda / \mu$
and type $\alpha$. 
In fact Equation~\eqref{eq:final} follows from Lemma~\ref{lemma:model} and the
Murnaghan--Nakayama rule,
by some manipulations that are essentially formal.
By Lemma~\ref{lemma:model} we have
\begin{equation}\label{eq:formal1} 
\begin{split} \Ind^{S_m}_H 
\bigl( 
\chi^{\lambda^{(0)} /  \mu^{(0)}} {}&{}\times \cdots 
\times \chi^{\lambda^{(n-1)} / \mu^{(n-1)}} \bigr)
(g_\alpha)  \\ &= \sum_{\mathbf{t}} \chi^{\lambda^{(0)} / \mu^{(0)}} (g_{\alpha_0(\mathbf{t})})
\ldots \chi^{\lambda^{(n-1)} / \mu^{(n-1)}} (g_{\alpha_{n-1}(\mathbf{t})} )
\end{split}
\end{equation}
where the sum is over all $(\ell_0,\ldots,\ell_{n-1})$-tabloids $\mathbf{t}$   
such that $g_\alpha \mathbf{t} = \mathbf{t}$ and $\alpha_i(\mathbf{t})$ is 
the cycle type of the permutation of row $i+1$ of $\mathbf{t}$
induced by $g_\alpha$.

For such a tabloid $\mathbf{t}$, let
\[ f(\mathbf{t}) = \sum \sgn(T_0) \ldots \sgn(T_{n-1}) \]
where the sum is over all $n$-quotient border-strip tableaux $(T_0,\ldots, T_{n-1})$
of shape $\lambda / \mu$ and type $\alpha$, 
such that $T_i$ has a border strip labelled $j$ (of length~$\alpha_j)$
if and only if the elements of the orbit of $g_\alpha$ corresponding
to the part $\alpha_j$ lie in row $i+1$ of $\mathbf{t}$.
The Murnaghan--Nakayama rule implies that if $g_\alpha \mathbf{t} = \mathbf{t}$
then
\[ \chi^{\lambda^{(0)} / \mu^{(0)}} (g_{\alpha_0(\mathbf{t})})
\ldots \chi^{\lambda^{(n-1)} / \mu^{(n-1)}} (g_{\alpha_{n-1}(\mathbf{t})} ) =  f(\mathbf{t}), \]
and so, by Equation~\eqref{eq:formal1},
\begin{equation*}\label{eq:formal2} 
\Ind^{S_m}_H 
\bigl( 
\chi^{\lambda^{(0)} /  \mu^{(0)}} \times \cdots 
\times \chi^{\lambda^{(n-1)} / \mu^{(n-1)}} \bigr)
(g_\alpha) = \sum f(\mathbf{t}) 
\end{equation*}
where the sum is over all $(\ell_0, \ldots, \ell_{n-1})$-tabloids $\mathbf{t}$ such that $g_\alpha \mathbf{t} = \mathbf{t}$.
Every $n$-quotient border-strip tableau of shape $\lambda /\mu$
and type $\alpha$ corresponds to some tabloid $\mathbf{t}$
such that  $g_\alpha \mathbf{t} = \mathbf{t}$.
Thus
\begin{equation*}
\label{eq:sums}
 \sum \sgn(T_0) \ldots \sgn(T_{n-1}) = \sum f(\mathbf{t}) \end{equation*}
where the left-hand sum is over all $n$-quotient border-strip tableaux
of shape $\lambda / \mu$ and type $\alpha$, and the right-hand sum is over all
$(\ell_0, \ldots, \ell_{n-1})$-tabloids 		
 $\mathbf{t}$ such that \hbox{$g_\alpha \mathbf{t} = \mathbf{t}$}. 
Equation~\eqref{eq:final} now
follows on comparing the two preceding equations. 

\subsection{Example}
\label{subsec:example}
We give an example of the correspondences used in the proof of Theorem~\ref{thm:ind}.
Let $\lambda / \mu = (8,5,3,2,2,2) / (2,2,1,1,1)$. Any border-strip tableau of shape $\lambda/\mu$
and type $(3^5)$ has either two or four $3$-border-strips of height $1$,
with the rest of height $0$, so  
\hbox{$\nsgn_3(\lambda /\mu) = 1$}. Let $\alpha = (2,1,1,1)$.
 By the Murnaghan--Nakayama rule,~$\chi^{\lambda / \mu}(g_{3 \alpha})$
is the sum of the signs of the four border-strip tableaux of type $\lambda / \mu$
and type~$3 \alpha =(6,3,3,3)$ 
shown in Figure~\ref{fig:bijection} above. 
Their signs are $+1$, $-1$, $-1$, $-1$, respectively,
so  $\chi^{\lambda / \mu}(g_{3 \alpha}) = -2$.
These tableaux are in bijection with the four $3$-quotient border-strip
tableaux of shape $\lambda / \mu$ and type $\alpha=(2,1,1,1)$ shown in Figure~\ref{fig:bijection};
since $\sgn_3(\lambda / \mu) = 1$, the bijection is sign preserving.

\begin{figure}[t]
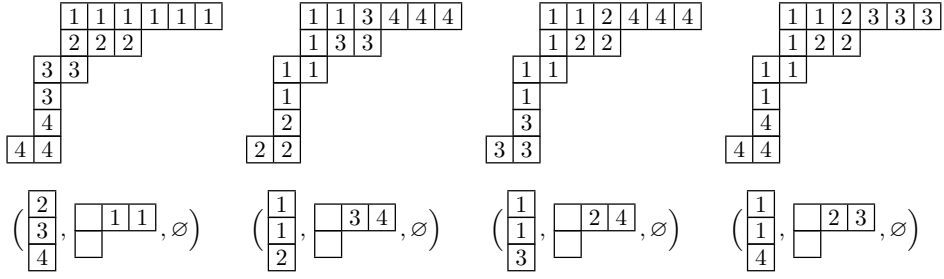
 
\scalebox{0.8}{\hskip-1.1in\parbox{\textwidth}{
\[ \young(::111111,::222,:33,:3,:4,44)\quad
\young(::113444,::133,:11,:1,:2,22)\quad
\young(::112444,::122,:11,:1,:3,33)\quad
\young(::112333,::122,:11,:1,:4,44) 
\]
}}
\scalebox{0.8}{\hskip-1.05in\parbox{\textwidth}{
\[ \Bigl(\, \young(2,3,4)\, ,\, \young(\;11,\;) \, , \emptyset \Bigr)\hskip22pt
\Bigl(\, \young(1,1,2)\, ,\, \young(\;34,\;) \, , \emptyset \Bigr)\hskip22pt
\Bigl(\, \young(1,1,3)\, ,\, \young(\;24,\;) \, , \emptyset \Bigr)\hskip22pt
\Bigl(\, \young(1,1,4)\, ,\, \young(\;23,\;) \, , \emptyset \Bigr)
\]
}}

\caption{
The bijection in Proposition~\ref{prop:nbij} between border-strip tableaux of shape $(8,5,3,2,2,2) / (2,2,1,1,1)$
and type $(6,3,3,3)$ and $3$-quotient border-strip tableaux of the same shape
and type $(2,1,1,1)$. The shapes of the border-strip tableaux forming each $3$-quotient border-strip
tableau are given by the $3$-quotient of $(8,5,3,2,2,2) / (2,2,1,1,1)$, namely   
 $\bigl( (1,1,1), (3,1) / (1,1), \emptyset \bigr)$. To make
 clear the skew-shape, the 
  tableaux of shape $(3,1) / (1,1)$
 are drawn as $(3,1)$-tableaux with two empty boxes.
 }
\label{fig:bijection}
\end{figure}

The $3$-quotient of $\lambda / \mu$ is 
 $\bigl( (1,1,1), (3,1) / (1,1), \emptyset \bigr)$, so 
the characters of $S_3$ and $S_2$ we must consider
are the sign character 
and the trivial character, 
respectively. Taking $g_{(2,1,1,1)} = (12) \in S_5$, and 
following the end of the proof of Theorem~\ref{thm:ind}, we see that there are four
$(3,2)$-tabloids fixed by $(12)$, namely
\[ \yngt(345,12)\, , \quad \yngt(123,45)\, , \quad \yngt(124,35)\, , \quad \yngt(125,34),\]
in the order corresponding to the tableaux shown in Figure~\ref{fig:bijection}.
The corresponding values of the function $f$ used in the proof of Theorem~\ref{thm:ind}
are $+1$, $-1$, $-1$, $-1$ respectively. It should be noted that
in general there will be several tableaux corresponding to each 
product of character values 
\[ \chi^{\lambda^{(0)} / \mu^{(0)}} (g_{\alpha_{0}(\mathbf{t})})\ldots
\chi^{\lambda^{(n-1)} / \mu^{(n-1)}} (g_{\alpha_{n-1}(\mathbf{t})}); \]
it is a special feature of this example that each $f(\mathbf{t})$ has
a single summand, and so the bijection extends all the way to
tabloids.

\section{Proof of Theorem~\ref{thm:cd}}
\label{sec:proof}

We shall prove Theorem~\ref{thm:cd}  by induction on the number
of parts of $\gamma$. 
Most of the work occurs  in proving
the base case when $\gamma$ has a single part. In the first step
we combine the results of \S 3 and \S 4. For later use in \S\ref{sec:gendef},
we state the following proposition for a general deflation map.

\begin{proposition}\label{prop:gen_basecase}
Let $m$, $n \in \N$, let $\theta$ be an irreducible
character of $S_m$, and let $\lambda / \mu$ be a skew-partition of $mn$.
Let $g\in S_n$ be an $n$-cycle.
If $\lambda / \mu$ is not $n$-decomposable, then
$\Defres^\theta_{S_n}(g) = 0$. 
If $\lambda / \mu$ is $n$-decomposable, 
then
\[ \begin{split}
(\Defres^\theta_{S_n} 
\chi^{\lambda / \mu} )(g) 
= \nsgn_n(\lambda / \mu)
\left< \Ind^{S_m}_H 
 \bigl(
\chi^{\lambda^{(0)} / \mu^{(0)}} \times \cdots \times 
\chi^{\lambda^{(n-1)}/ \mu^{(n-1)}} \bigr), \theta \right>,
\end{split}
 \]
where $H = S_{|\lambda^{(0)} / \mu^{(0)}|} \times \cdots
\times S_{|\lambda^{(n-1)} / \mu^{(n-1)}|}$.
\end{proposition}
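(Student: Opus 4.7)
The plan is to deduce this proposition directly by combining Proposition~\ref{prop:omega}(ii) with Theorem~\ref{thm:ind}. Neither side requires further combinatorial work; what remains is to recognise a character-theoretic inner product in the resulting sum.

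First, I would apply Proposition~\ref{prop:omega}(ii), which, since $g$ is an $n$-cycle, gives
\[ (\Defres^\theta_{S_n} \chi^{\lambda/\mu})(g) = \sum_{\alpha} \frac{\chi^{\lambda/\mu}(g_{n\alpha})}{z_\alpha}\,\theta(g_\alpha), \]
where the sum is over all partitions $\alpha$ of $m$. Next, I would substitute the values $\chi^{\lambda/\mu}(g_{n\alpha})$ provided by Theorem~\ref{thm:ind}. In the case where $\lambda/\mu$ is not $n$-decomposable, every term $\chi^{\lambda/\mu}(g_{n\alpha})$ vanishes, so the sum is zero and the first half of the proposition is immediate.

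For the $n$-decomposable case, Theorem~\ref{thm:ind} replaces $\chi^{\lambda/\mu}(g_{n\alpha})$ by $\nsgn_n(\lambda/\mu)\,\psi(g_\alpha)$, where
\[ \psi = \Ind^{S_m}_H\bigl(\chi^{\lambda^{(0)}/\mu^{(0)}}\times\cdots\times\chi^{\lambda^{(n-1)}/\mu^{(n-1)}}\bigr). \]
Pulling the constant $\nsgn_n(\lambda/\mu)$ outside, the remaining expression becomes
\[ \nsgn_n(\lambda/\mu) \sum_{\alpha \vdash m} \frac{\psi(g_\alpha)\,\theta(g_\alpha)}{z_\alpha}. \]
The final step is to recognise the sum as the standard inner product $\langle \psi, \theta\rangle$ of class functions on $S_m$: since $z_\alpha$ is the centraliser order of $g_\alpha$ and characters of symmetric groups take real (indeed integer) values, the usual reduction of $\frac{1}{|S_m|}\sum_{g\in S_m}\psi(g)\overline{\theta(g)}$ to a sum over conjugacy classes yields exactly this expression. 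This gives the stated formula.

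There is no genuine obstacle here; the proposition is essentially a bookkeeping step that packages Theorem~\ref{thm:ind} into the language of deflations. The one point to be careful about is the range of summation and the use of Theorem~\ref{thm:ind} in the non-decomposable case, which must be invoked to justify term-by-term vanishing rather than appealing to any combinatorial cancellation.
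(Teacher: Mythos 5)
Your proposal is correct and follows essentially the same route as the paper: both cases are handled by combining Proposition~\ref{prop:omega}(ii) with Theorem~\ref{thm:ind}, the only cosmetic difference being that the paper invokes the inner-product form $\langle \omega(\Res_{S_m\wr S_n}\chi^{\lambda/\mu}),\theta\rangle$ directly while you re-derive that inner product from the explicit sum over partitions $\alpha$ of $m$.
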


\begin{proof}
If $\lambda / \mu$ is not $n$-decomposable
then, by Theorem~\ref{thm:ind}, $\chi^{\lambda / \mu}(g_{n\alpha}) = 0$  
for all partitions $\alpha$ of $m$. Hence, by Proposition~\ref{prop:omega}(ii), 
\hbox{$(\Defres^\theta_{S_n}\chi^{\lambda/\mu})(g) = 0$}. 
If $\lambda / \mu$ is $n$-decomposable then,
using Proposition~\ref{prop:omega}(i), we may restate Theorem~\ref{thm:ind} as
\begin{equation}
\label{eq:omegares}
\omega(\Res_{S_m \wr S_n} \chi^{\lambda / \mu}) = \varepsilon_n(\lambda / \mu)
\Ind^{S_m}_H \bigl(
\chi^{\lambda^{(0)} / \mu^{(0)}} \times \cdots \times 
\chi^{\lambda^{(n-1)}/ \mu^{(n-1)}} \bigr). 				
\end{equation}
The result now follows from
 Proposition~\ref{prop:omega}(ii).
\end{proof}

It is clear from the condition on row numbers in Equation~\eqref{eq:row_numbers} in Definition~\ref{def:a}
that if $\lambda / \mu$ is a skew-partition of $mn$ then there is at most
one $m$-border-strip tableau of shape $\lambda / \mu$ and type $(n)$.    
The following lemma gives a more precise condition.
Recall that a skew-partition $\sigma / \tau$ is said to be a \emph{horizontal
strip} if the Young diagram of $\sigma / \tau$ has no two boxes in the same column.

\begin{lemma}\label{lemma:horiz}
Let $m$, $n \in \N$ and let $\lambda / \mu$ be a 
skew-partition of $mn$.
If $\lambda / \mu$ is $n$-decomposable, and each $\lambda^{(i)} / \mu^{(i)}$ 
is a horizontal strip, then there is a unique $m$-border-strip tableau
of shape $\lambda / \mu$ and type $(n)$; this tableau has sign $\nsgn_n(\lambda / \mu)$.
Otherwise there are no such tableaux.
\end{lemma}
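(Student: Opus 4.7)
The plan is to translate the row-number condition on an $m$-border-strip tableau of type $(n)$ into a condition on bead positions on the $n$-runner abacus, and then to use the horizontal-strip hypothesis to exhibit the unique valid ordering of bead moves (or, in its absence, to obstruct existence). A border-strip tableau of shape $\lambda/\mu$ and type $(n^m)$ corresponds, as in the proof of Proposition~\ref{prop:nbij}, to a sequence of $m$ bead moves converting $\Gamma(\mu)$ into $\Gamma(\lambda)$, each taking a single bead from some position $p - n$ to an empty position $p$ on the same runner. Writing $p_j$ for the position reached by the bead at step $j$, a short rank count gives
\[
N_j = tn - \bigl|\{b \in \Gamma(\lambda^j) : b < p_j\}\bigr|,
\]
since this quantity is the row of $\lambda^j$ occupied by the bead now at $p_j$. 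The pivotal step is the equivalence $N_j \geq N_{j+1}$ if and only if $p_j < p_{j+1}$: writing $\Gamma(\lambda^{j+1}) = (\Gamma(\lambda^j) \cup \{p_{j+1}\}) \setminus \{p_{j+1} - n\}$ and using that $p_{j+1} - n \in \Gamma(\lambda^j)$ is strictly less than $p_{j+1}$, one obtains $|\{b \in \Gamma(\lambda^{j+1}) : b < p_{j+1}\}| = |\{b \in \Gamma(\lambda^j) : b < p_{j+1}\}| - 1$, and comparing this with $|\{b \in \Gamma(\lambda^j) : b < p_j\}|$ in the two cases $p_j < p_{j+1}$ and $p_j > p_{j+1}$ completes the argument (the case $p_j = p_{j+1}$ is precluded by the validity of move $j{+}1$). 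The row-number condition in Definition~\ref{def:a} is therefore equivalent to requiring $p_1 < p_2 < \cdots < p_m$, so any $m$-border-strip tableau of type $(n)$ is determined uniquely by its unordered set of bead moves.

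Next, I translate the horizontal-strip hypothesis. On runner $i$, enumerate the beads of $\mu$ by increasing position as $q^{(i)}_1 < \cdots < q^{(i)}_{s_i}$ and those of $\lambda$ as $p^{(i)}_1 < \cdots < p^{(i)}_{s_i}$. Unwinding the definitions in Definition~\ref{def:n_quo} shows that $\lambda^{(i)}/\mu^{(i)}$ is a horizontal strip if and only if $q^{(i)}_{j+1} > p^{(i)}_j$ for every $j$; informally, no bead moves past the initial position of the bead originally just above it on its runner. Assuming this for every $i$, I order the required bead moves by strictly increasing new abacus position and check validity: on each runner the moves of a fixed bead occur in their correct internal order, and throughout the moves of the $j$-th bead the bead above on that runner remains at $q^{(i)}_{j+1} > p^{(i)}_j$ and hence never obstructs. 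By the preceding paragraph this sequence satisfies the row-number condition, and by the same paragraph it is unique. Conversely, if some $\lambda^{(i)}/\mu^{(i)}$ is not a horizontal strip, choose $j$ with $q^{(i)}_{j+1} \leq p^{(i)}_j$; then bead $j$ must eventually move to position $q^{(i)}_{j+1}$, but any ordering by strictly increasing new positions would schedule this move before bead $j{+}1$'s first move (whose new position $q^{(i)}_{j+1} + n$ is strictly larger), which is impossible because bead $j{+}1$ still blocks $q^{(i)}_{j+1}$. Hence no $m$-border-strip tableau of type $(n)$ exists.

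The sign assertion follows by applying Proposition~\ref{prop:nbij} with $\alpha = (1^m)$: each $T_i$ is then a standard Young tableau whose constituent border strips have length $1$ and hence height $0$, so $\sgn(T_i) = 1$ and $\sgn(T) = \nsgn_n(\lambda/\mu)$.

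The main obstacle is the equivalence in the first paragraph: the rank of the moved bead depends on the abacus configuration both before and after the move, so the counts have to be tracked carefully. Once this pivot is in place, the rest of the argument reduces to a direct analysis of the horizontal-strip inequalities on each runner.
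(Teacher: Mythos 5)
Your argument is correct and follows essentially the same route as the paper's proof: both translate the row-number condition into a forced ordering of the single bead moves on the $n$-runner abacus, identify the horizontal-strip hypothesis with the interlacing of bead positions on each runner (the paper's inequality~\eqref{eq:beads}), construct or obstruct the unique admissible sequence of moves accordingly, and obtain the sign from Proposition~\ref{prop:nbij}. The only differences are cosmetic: you build $\lambda$ up from $\mu$ by downward moves ordered by increasing landing position where the paper strips $\lambda$ down to $\mu$ by upward moves, and your explicit rank count makes the equivalence between the row-number condition and the move ordering more precise than the paper's terser observation.
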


\begin{proof}
Suppose that $T$ is an $m$-border-strip tableau of type $(n)$ and shape $\lambda / \mu$. 
Then by Definition~\ref{def:n_dec}, $\lambda / \mu$ is $n$-decomposable. 
Let $\Gamma(\lambda)$ be an $n$-runner abacus display for $\lambda$ using $tn$ beads
for some $t \in \N$. Let $\Gamma(\mu)$ be the abacus display for $\mu$ obtained
from $\Gamma(\lambda)$ by an appropriate sequence of $m$ single bead moves, so that 
in each move a bead is slid upwards into a gap immediately above it.
We label the positions on the abacus from top to bottom so that the
positions in row $r$ of an abacus display are numbered $(r-1)n, \ldots, rn-1$ (as usual).
Observe that the row number of a border strip of length $n$ in~$T$ corresponding to a bead in position $p$ of $\Gamma(\lambda)$
is the number of beads in positions $p+1$, $p+2$, \ldots \ of $\Gamma(\lambda)$.   
Therefore if $p < p'$ and the beads in positions $p$ and $p'$
of $\Gamma(\lambda)$ both correspond to border strips in $T$, then
in the sequence of bead moves corresponding to $T$, 
the bead in position $p'$ is moved upwards before the bead in position~$p$.

Let $i \in \{0,\ldots,n-1\}$. Suppose that $\Gamma(\lambda)$ has beads in positions $nq + i$ and $nq' + i$ where $q < q'$,
and that $\Gamma(\mu)$ has no beads in positions $n(q+1) + i$, \ldots, $nq' + i$. 
The bead in position $nq+i$ of $\Gamma(\lambda)$ prevents the
bead initially in position $nq'+i$ from reaching its final position in $\Gamma(\mu)$.
Therefore the bead in position $nq+i$ must be moved before the bead in position $nq'+i$ reaches its final position.
This contradicts the previous paragraph.
 Hence there exist $x_j \in \N_0$ and $y_j \in \N_0$ such
that the beads on runner $i$ of $\Gamma(\lambda)$ are in positions
$\{i + nx_j : 1\le j \le s \}$, the beads on runner $i$ of $\Gamma(\mu)$
are in positions $\{ i + ny_j : 1 \le j \le s \}$ and
\begin{equation}
\label{eq:beads}
 y_1 \le x_1 < y_2 \le x_2 < \cdots < y_s \le x_s.
\end{equation}
It easily follows that $\lambda^{(i)} / \mu^{(i)}$ is a horizontal strip.
Then, by Proposition~\ref{prop:nbij}, $\sgn(T) = \nsgn_n(\lambda / \mu)$.
 
Conversely, suppose that $\lambda / \mu$ is $n$-decomposable
and each $\lambda^{(i)} / \mu^{(i)}$ is a horizontal strip.
Then the inequality~\eqref{eq:beads}
on the bead positions in each runner holds.
We now describe a sequence of $m$ single upward bead moves that transforms $\Gamma(\lambda)$ into $\Gamma(\mu)$, and thus corresponds to a border-strip tableau $T$ of shape $\lambda / \mu$ and type $(n)^{\star m}$. At each step, locate the bead with maximal position $p$ such that there is no bead in position $p$ of $\Gamma(\mu)$. Slide that bead up into position $p-n$. This is possible by inequality~\eqref{eq:beads}. The row numbers of  the 
border strips of length $n$ corresponding to this sequence of  moves are
increasing, so $T$ is a $m$-border-strip tableau of shape $\lambda / \mu$ and type~$(n)$. 
Uniqueness is clear since there is always at most one $m$-border strip tableau of shape $\lambda / \mu$ and type $(n)$.
\end{proof}

We can now complete the proof of the base case.

\begin{proposition}\label{prop:basecase}
Let $m$, $n \in \N$ and let $g\in S_n$ be an $n$-cycle.
If $\lambda / \mu$ is a
skew-partition of $mn$ then 
\[ (\Defres_{S_n} \chi^{\lambda / \mu})(g) 
= a_{\lambda / \mu, (n)} .\]
\end{proposition}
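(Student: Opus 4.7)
The plan is to derive this proposition by specializing Proposition~\ref{prop:gen_basecase} to the trivial character $\theta = 1_{S_m}$ (so that $\Defres^\theta_{S_n}$ becomes $\Defres_{S_n}$), and then to match the resulting expression with the combinatorial formula already supplied by Lemma~\ref{lemma:horiz}. Essentially all of the genuinely hard work has already been done in those two results, so the proof should be a short assembly.

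If $\lambda/\mu$ is not $n$-decomposable, then Proposition~\ref{prop:gen_basecase} immediately gives $(\Defres_{S_n}\chi^{\lambda/\mu})(g) = 0$, while Lemma~\ref{lemma:horiz} gives $a_{\lambda/\mu,(n)} = 0$, so both sides vanish. Now suppose that $\lambda/\mu$ is $n$-decomposable. Proposition~\ref{prop:gen_basecase} with $\theta = 1_{S_m}$ asserts
\[ (\Defres_{S_n}\chi^{\lambda/\mu})(g) = \nsgn_n(\lambda/\mu)\left\langle \Ind^{S_m}_H\bigl(\chi^{\lambda^{(0)}/\mu^{(0)}}\times\cdots\times\chi^{\lambda^{(n-1)}/\mu^{(n-1)}}\bigr),\ 1_{S_m}\right\rangle. \]
Frobenius reciprocity, followed by the fact that the inner product of outer tensor products of characters splits as a product of inner products on the factors, rewrites this inner product as
\[ \left\langle \chi^{\lambda^{(0)}/\mu^{(0)}}\times\cdots\times\chi^{\lambda^{(n-1)}/\mu^{(n-1)}},\ 1_H\right\rangle = \prod_{i=0}^{n-1}\left\langle \chi^{\lambda^{(i)}/\mu^{(i)}},\ 1_{S_{\ell_i}}\right\rangle. \]

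Each factor $\langle \chi^{\lambda^{(i)}/\mu^{(i)}}, 1_{S_{\ell_i}}\rangle$ is the Littlewood--Richardson coefficient $c^{\lambda^{(i)}}_{\mu^{(i)},(\ell_i)}$, and by the skew form of Young's rule (equivalently, Pieri's rule) this coefficient equals $1$ if $\lambda^{(i)}/\mu^{(i)}$ is a horizontal strip and $0$ otherwise. Therefore $(\Defres_{S_n}\chi^{\lambda/\mu})(g)$ equals $\nsgn_n(\lambda/\mu)$ precisely when $\lambda/\mu$ is $n$-decomposable and every $\lambda^{(i)}/\mu^{(i)}$ is a horizontal strip, and vanishes otherwise. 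Lemma~\ref{lemma:horiz} characterises $a_{\lambda/\mu,(n)}$ by exactly the same dichotomy and with exactly the same value, so the two sides are equal.

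The only step requiring attention is the evaluation of the inner products $\langle \chi^{\sigma/\tau},\, 1\rangle$ via Young's or Pieri's rule, but this is a direct classical fact; no serious obstacle arises beyond citing the two ingredients correctly.
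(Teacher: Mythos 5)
Your proposal is correct and follows essentially the same route as the paper's own proof: specialize Proposition~\ref{prop:gen_basecase} to $\theta = 1_{S_m}$, apply Frobenius reciprocity to reduce to the inner products $\left< \chi^{\lambda^{(i)}/\mu^{(i)}}, 1_{S_{\ell_i}}\right>$, evaluate these by Pieri's/Young's rule as the horizontal-strip indicator, and match the resulting dichotomy against Lemma~\ref{lemma:horiz}. No gaps.
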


\begin{proof}
If $\lambda / \mu$ is not $n$-decomposable then 
\hbox{$a_{\lambda / \mu,(n)} = 0$}
by Lemma~\ref{lemma:horiz}.
 Proposition~\ref{prop:gen_basecase} implies
 that 
the result holds in this case.

Now suppose that $\lambda / \mu$ is $n$-decomposable.
Let $\ell_i = |\lambda^{(i)} / \mu^{(i)}|$ for each $i \in \{0,\ldots,n-1\}$.
By Proposition~\ref{prop:gen_basecase} 
 and Frobenius reciprocity, we have
\[ (\Defres_{S_n} \chi^{\lambda/\mu})(g)=
\nsgn_n(\lambda/\mu)
\left< \chi^{\lambda^{(0)} / \mu^{(0)}}, 1_{S_{\ell_0}} \right> 
\ldots \left< \chi^{\lambda^{(n-1)} / \mu^{(n-1)}}, 1_{S_{\ell_{n-1}}} \right>. \]

It follows from \cite[Theorem~2.3.13(ii)] {JK}
that if $\sigma / \tau$ is a skew-partition of $\ell$ then
\[ \left< \chi^{\sigma / \tau}, 1_{S_\ell} \right> =  \begin{cases}
1 & \text{if $\sigma / \tau$ is a horizontal strip} \\
0 & \text{otherwise}.\end{cases}
\]
Therefore $(\Defres_{S_n}\chi^{\lambda/\mu})(g) = \nsgn_n(\lambda / \mu)$ if each
$\lambda^{(i)} / \mu^{(i)}$ is a horizontal strip, and otherwise 
$(\Defres_{S_n}\chi^{\lambda/\mu})(g) = 0$. The proposition
now follows from Lemma~\ref{lemma:horiz}.
\end{proof}

For the inductive step we need the following lemma and proposition.
The former is well-known and can be deduced from~\cite[2.3.12]{JK}.  
We write $\mu\subseteq \lambda$ if $\mu$ is a subpartition of $\lambda$ (i.e.\ the Young diagram of $\mu$ is contained in that of $\lambda$). 

\begin{lemma}\label{lemma:skewres}
Let $\lambda / \mu$ be a skew-partition of $r$. If $1 \le c < r$ then
\[ \Res_{S_{c} \times S_{r-c}} \chi^{\lambda / \mu} =
\sum_\tau \chi^{\tau / \mu} \times \chi^{\lambda / \tau} \]
where the sum is over all partitions $\tau$ such that
 $\mu \subseteq \tau \subseteq \lambda$
and $|\tau / \mu| = c$.
\end{lemma}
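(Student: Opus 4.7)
The plan is to prove the identity by comparing character values on all conjugacy classes of $S_c \times S_{r-c}$. Every such conjugacy class has a representative of the form $(g_\alpha, g_\beta)$ with $\alpha$ a partition of $c$ and $\beta$ a partition of $r-c$, so it suffices to verify
\[ \chi^{\lambda/\mu}(g_\alpha, g_\beta) = \sum_\tau \chi^{\tau/\mu}(g_\alpha)\, \chi^{\lambda/\tau}(g_\beta), \]
where $\tau$ ranges over partitions with $\mu \subseteq \tau \subseteq \lambda$ and $|\tau/\mu| = c$.

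First I would apply the Murnaghan--Nakayama rule for skew characters (which the paper assumes as an input) to the left-hand side, using the composition $\gamma = (\alpha_1, \ldots, \alpha_k, \beta_1, \ldots, \beta_\ell)$ obtained by concatenating $\alpha$ and $\beta$. Here I use the fact, already noted in the paper following Stanley's remark, that the sum $\sum_T \sgn(T)$ over border-strip tableaux of a given shape and a given composition type depends only on the underlying multiset of parts; this legitimises taking the parts of $\alpha$ before those of $\beta$.

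The central step is then a simple bijective decomposition. A border-strip tableau $T$ of shape $\lambda/\mu$ and type $\gamma$ is built as a sequence of border strips labelled $1, 2, \ldots, k{+}\ell$; after strips $1, \ldots, k$ have been added, the filled region has some shape $\tau/\mu$ with $\mu \subseteq \tau \subseteq \lambda$ and $|\tau/\mu| = c$. Restricting $T$ to strips $1, \ldots, k$ gives a border-strip tableau $T_1$ of shape $\tau/\mu$ and type $\alpha$, and restricting to strips $k{+}1, \ldots, k{+}\ell$ gives a border-strip tableau $T_2$ of shape $\lambda/\tau$ and type $\beta$. This is clearly a bijection between tableaux $T$ and triples $(\tau, T_1, T_2)$, and $\sgn(T) = \sgn(T_1)\sgn(T_2)$ because the heights of the strips in $T$ partition into the heights of the strips in $T_1$ and those in $T_2$. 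Summing signs and reapplying Murnaghan--Nakayama on each factor yields the displayed character identity.

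I do not anticipate any serious obstacle: the proof is essentially tautological once one unpacks the recursive form of the Murnaghan--Nakayama rule. The only nontrivial point is the appeal to order-independence of the composition version of that rule, but this is standard and is used elsewhere in the paper. If one preferred to avoid even that appeal, one could argue by induction on $\ell(\gamma)$, stripping off one border strip at a time via the recursive Murnaghan--Nakayama formula, but the bijective presentation above is the most economical.
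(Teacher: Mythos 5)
Your proof is correct, but it takes a genuinely different route from the paper's. The paper proves this lemma purely formally: starting from the branching identity $\Res_{S_{s-r}\times S_r}\chi^\lambda=\sum_\mu \chi^\mu\times\chi^{\lambda/\mu}$ of \cite[2.3.12]{JK} (with $|\lambda|=s$), it restricts $\chi^\lambda$ to the three-factor Young subgroup $S_{s-r}\times S_{r-c}\times S_c$ in two different orders and compares the coefficients of $\chi^\mu$; no character values are ever computed. Your argument instead verifies the identity pointwise on conjugacy classes, using the skew Murnaghan--Nakayama rule together with the bijection that splits a border-strip tableau of type $\alpha\cdot\beta$ at the intermediate shape $\tau$. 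This is sound: the skew Murnaghan--Nakayama rule is an admitted input elsewhere in the paper (so there is no circularity), the order-independence you invoke is automatic since the rule computes a character value, and the gluing direction of your bijection works because within each row and column the boxes of $\tau/\mu$ precede those of $\lambda/\tau$, so the relabelled concatenation is again a valid border-strip tableau with multiplicative sign. The trade-off is that the paper's proof needs only the definition of skew characters via restriction and is a few lines of formal manipulation, whereas yours requires the heavier combinatorial input of the skew Murnaghan--Nakayama rule; in compensation, your splitting bijection makes explicit the combinatorial content that the paper only exploits later, in the inductive step at the end of \S 5, where an $m$-border-strip tableau is cut into the strips for $\gamma_1$ and the remainder.
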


For later use we state and prove the following 
proposition for a general deflation map.    

\begin{proposition}\label{prop:defres_decomp} 
 Let $m$, $n \in \N$ and let $\lambda / \mu$
be a skew-partition of $mn$. Let $\theta$ be an irreducible
character of $S_m$. Let $g\in S_n$.
If $g = kh$ where $k \in S_{\ell}$ 
and $h \in S_{n-\ell}$,  then 
\begin{equation*}
(\Defres^\theta_{S_n} \chi^{\lambda/\mu})(g) = \sum_{\tau} 
(\Defres^\theta_{S_\ell} \chi^{\tau / \mu})(k)
(\Defres^\theta_{S_{n-\ell}} \chi^{\lambda / \tau})(h)  
\end{equation*}
where the sum is over all partitions $\tau$ such
that $\mu \subseteq \tau \subseteq \lambda$ and $|\tau / \mu|  = m\ell$.
\end{proposition}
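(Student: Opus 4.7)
The plan is to leverage Proposition~\ref{prop:deflation}, which expresses $\Defres^\theta_{S_n} \chi^{\lambda / \mu}$ as an average over the base group, together with Lemma~\ref{lemma:skewres} to split the skew character. The key observation is that when $g = kh$ with $k \in S_\ell$ fixing $\{\ell+1,\ldots,n\}$ pointwise and $h \in S_{n-\ell}$ fixing $\{1,\ldots,\ell\}$ pointwise, the subgroup $(S_m \wr S_\ell) \times (S_m \wr S_{n-\ell})$ embeds naturally in $S_m \wr S_n$, and any element of the form $(k'\,;g) \in S_m \wr S_n$ lies in this subgroup. Correspondingly, the base group $B$ factors as $B_1 \times B_2$, where $B_1$ and $B_2$ are the base groups of $S_m \wr S_\ell$ and $S_m \wr S_{n-\ell}$ respectively.

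I would begin by writing $k' \in B$ as $(k'_1, k'_2)$ with $k'_i \in B_i$, so that $(k'\,;g) = (k'_1\,;k)(k'_2\,;h)$ as an element of $S_m \wr S_n$; viewed in $S_{mn}$ this element acts as $u_1 u_2$ where $u_1$ permutes $\{1,\ldots,m\}\times\{1,\ldots,\ell\}$ and $u_2$ permutes $\{1,\ldots,m\}\times\{\ell+1,\ldots,n\}$. Applying Lemma~\ref{lemma:skewres} with $r = mn$ and $c = m\ell$ (identifying the Young subgroup $S_{m\ell} \times S_{m(n-\ell)}$ in the obvious way) yields
\[ \chi^{\lambda/\mu}(u_1 u_2) = \sum_\tau \chi^{\tau/\mu}(u_1)\, \chi^{\lambda/\tau}(u_2), \]
with the sum over partitions $\tau$ with $\mu \subseteq \tau \subseteq \lambda$ and $|\tau/\mu| = m\ell$. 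Simultaneously, the definition of the wreath product character gives the factorisation
\[ \widetilde{\theta^{\times n}}(k'\,;g) = \widetilde{\theta^{\times \ell}}(k'_1\,;k)\, \widetilde{\theta^{\times (n-\ell)}}(k'_2\,;h). \]

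Substituting both factorisations into the averaging formula of Proposition~\ref{prop:deflation} and using $|B| = |B_1|\,|B_2|$, the double sum over $(k'_1, k'_2) \in B_1 \times B_2$ separates, giving
\[ \begin{split} (\Defres^\theta_{S_n} \chi^{\lambda/\mu})(g) = \sum_\tau \Bigl( \tfrac{1}{|B_1|} \sum_{k'_1 \in B_1} \chi^{\tau/\mu}(u_1)\widetilde{\theta^{\times \ell}}(k'_1\,;k) \Bigr) \\ \times \Bigl( \tfrac{1}{|B_2|} \sum_{k'_2 \in B_2} \chi^{\lambda/\tau}(u_2) \widetilde{\theta^{\times (n-\ell)}}(k'_2\,;h) \Bigr). \end{split} \]
A second application of Proposition~\ref{prop:deflation}, this time to each inner sum, identifies the two bracketed expressions with $(\Defres^\theta_{S_\ell} \chi^{\tau/\mu})(k)$ and $(\Defres^\theta_{S_{n-\ell}} \chi^{\lambda/\tau})(h)$ respectively, and the desired identity follows.

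The only real obstacle is bookkeeping: one must verify carefully that for $g=kh$ of the stated form the element $(k'\,;g)$ really does decompose as $u_1 u_2$ with $u_i$ living in the appropriate symmetric group factor, so that Lemma~\ref{lemma:skewres} is applicable. This is a direct consequence of the convention spelled out at the start of \S 3, where the base group component indexed by $j$ acts on the block $\Delta_j$; since $k$ and $h$ have disjoint support on $\{1,\ldots,n\}$, the corresponding block permutations are independent. Everything else is a routine reshuffling of sums.
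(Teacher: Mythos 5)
Your proof is correct and follows essentially the same route as the paper: both reduce the claim to Lemma~\ref{lemma:skewres} together with the compatibility of deflation with restriction to $(S_m \wr S_\ell) \times (S_m \wr S_{n-\ell})$. The only difference is that the paper asserts this compatibility as ``easily checked'' at the level of characters, whereas you verify it explicitly by splitting the base group as $B_1 \times B_2$ and applying the averaging formula of Proposition~\ref{prop:deflation} twice -- a legitimate filling-in of the same step.
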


\begin{proof}
Let $B$ be the base group of the wreath product $S_m \wr S_n \le S_{mn}$.
Choose a subgroup $S_{m\ell} \times S_{m(n-\ell)} \le S_{mn}$ containing $B$.
If $\psi$ is a character of $S_{m} \wr S_n$ then it is easily checked that
\[ \Res_{S_\ell \times S_{n-\ell}} (\Def_{S_n}^{\theta} 
\psi) = (\Def_{S_\ell}^{\theta} \times 
\Def_{S_{n-\ell}}^{\theta})   
(\Res^{S_m \wr S_n}_{S_m \wr S_\ell \times S_m \wr S_{n-\ell}} \psi).\]
Hence
\[ \Res_{S_\ell \times S_{n-\ell}} 
(\Defres^\theta_{S_{n}} \chi) = 
(\Defres^\theta_{S_\ell} 
\times \Defres^\theta_{S_{n-\ell}}
)  
(\mathrm{Res}^{S_{mn}}_{S_{m\ell} \times S_{m(n-\ell)}} \chi) 
\]
for any character $\chi$ of $S_{mn}$. The proposition now
follows from the 
expression for $\Res_{S_{m\ell} \times S_{m(n-\ell)}} \chi^{\lambda / \mu}$
given in Lemma~\ref{lemma:skewres}.
\end{proof}

We are now ready to prove Theorem~\ref{thm:cd}. Let $m$, $n\in \N$
and let $\lambda / \mu$ be a skew-partition of $mn$. Let $\gamma = (\gamma_1,\ldots,
\gamma_d)$ be a composition of $n$. Let $g \in S_n$ have cycle type $\gamma$
and let $h \in S_{n-\gamma_1}$ have cycle type $(\gamma_2,\ldots,\gamma_d)$.
 Note that, by Lemma~\ref{lemma:horiz},
if $\tau / \mu$ is a skew-partition 
of $m \gamma_1$
then there is at most one $m$-border-strip tableau of shape $\tau / \mu$
and type $(\gamma_1)$.
 We shall denote this tableau by $T_{\tau / \mu}$ when it exists. By Definition~\ref{def:a},
$a_{\tau / \mu, (\gamma_1)} = \sgn(T_{\tau / \mu})$ 
(or is zero if no such tableau exists).
Therefore Proposition~\ref{prop:basecase} may be restated as
 $(\Defres_{S_{\gamma_1}} \chi^{\tau/\mu})(k)=
\sgn(T_{\tau / \mu})$, where $k \in S_{\gamma_1}$ is a $\gamma_1$-cycle   
and $(\Defres_{S_{\gamma_1}} \chi^{\tau/\mu})(k)=0$ if no tableau $T_{\tau / \mu}$ exists.
It therefore follows from
Proposition~\ref{prop:defres_decomp} that
\begin{equation}
\label{eq:defres_decomp_sgn}
 (\Defres_{S_n} \chi^{\lambda / \mu})(g) = \sum_\tau \sgn(T_{\tau/ \mu}) 
 (\Defres_{S_{n-\gamma_1}} \chi^{\lambda / \tau})(h) 
 \end{equation}
where the sum is over all partitions $\tau$ such that $\mu \subseteq \tau \subseteq \lambda$,
$|\tau / \mu| = m\gamma_1$ and
 there is an $m$-border-strip tableau of shape $\tau / \mu$. By induction
 on the number of parts of $\gamma$ we have 
\[
 (\Defres_{S_n} \chi^{\lambda / \mu})(g) = \sum_\tau \sgn(T_{\tau/ \mu}) 
 a_{\lambda / \tau, (\gamma_2,\ldots,\gamma_d)}
\]
with the same conditions on the sum. It is clear that if $T$
is an $m$-border-strip tableau of shape $\lambda / \mu$ and type $\gamma$
then the border strips in $T$ corresponding to the $m$ parts of length $\gamma_1$
in $\gamma^{\star m}$ form an $m$-border-strip tableau of shape $\tau / \mu$
for some $\tau$. Therefore the right-hand side of the previous equation
is~$a_{\lambda / \mu, \gamma}$. This completes the proof of Theorem~\ref{thm:cd}.

\section{An application to Foulkes' Conjecture}
\label{sec:Foulkes}

For $m$, $n \in \N$, let $\phi^{(m^n)}$
be the permutation character of $S_{mn}$
acting on all unordered set partitions of $\{1,2,\ldots,mn\}$ into
$n$ sets each of size $m$. Equivalently, 
$\phi^{(m^n)} = \Ind_{S_m \wr S_n}^{S_{mn}} 1$.
Foulkes' Conjecture asserts that if $m \le n$ then
\[ \left< \phi^{(m^n)}, \chi^\lambda \right> \ge 
\left< \phi^{(n^m)}, \chi^\lambda \right> \]
for all partitions $\lambda$ of $mn$.
Equivalent formulations of Foulkes' Conjecture exist in the language of  general linear groups, symmetric polynomials, and geometric invariant theory. 
Despite having been 
attacked from all these directions (and more), it has
only been proved when $m \le 4$ (see \cite{DentSiemons} and \cite{McKay}),  asymptotically
when $n$ is very large compared to $m$ (see \cite[page 352]{Brion}) and, in a computational result of M\"uller and Neunh\"offer \cite{MN},  when $m+n \le 17$.  For further background, and some recent results on the constituents
of $\phi^{(m^n)}$, we refer the reader to \cite{PagetWildon} and \cite{Giannelli}.

In this section we use character deflations to prove a new recursive formula for the
character multiplicities in Foulkes' Conjecture.
Firstly, using Frobenius reciprocity, then the inflation-deflation reciprocity relation 
\begin{equation} \label{eq:reciprocity}
\left< \Def_{S_n} \psi, \chi  \right> = 
\left< \psi, \Inf_{S_n}^{S_m \wr S_n} \!\chi  \right>,
\end{equation}
where $\psi$ is a character of $S_m \wr S_n$ and $\chi$ is a character of $S_n$, we observe that
\[
\left< \phi^{(m^n)}, \chi^\lambda \right> = 
\left< \Defres_{S_n} \chi^{\lambda} ,  1_{S_n} \right>.
\]

\begin{proposition}\label{prop:Foulkes}
Let $m$, $n \in \mathbf{N}$. If $\lambda$ is a 
partition of $mn$ then
\[ 
\left< \phi^{(m^n)}, \chi^\lambda \right>
 = \displaystyle\frac1{n} \displaystyle\sum_{\ell=1}^n \displaystyle\sum_{\mu}
\epsilon_\ell(\lambda / \mu) 
\left< \phi^{(m^{n-\ell})}, \chi^\mu \right>
\]
where the second sum is over all partitions $\mu$ of $\ell m$ such that there exists an $m$-border strip tableau of shape $\lambda / \mu$ and type $(\ell)$. 
\end{proposition}

\begin{proof}
We have seen that
\[
\left< \phi^{(m^n)}, \chi^\lambda \right> = 
\left< \Defres_{S_n} \chi^{\lambda} ,  1_{S_n} \right>=\displaystyle\frac1{n!} \displaystyle\sum_{g \in S_n} (\Defres_{S_n} \chi^{\lambda}) (g).
\]
We may write each $g \in S_n$ as a product of an $\ell$-cycle containing the letter~$1$ and some $h \in S_{n - \ell}$ acting on the remaining letters. 
The number of possible such  $\ell$-cycles is $(n-1)!/ (n-\ell)!$, 
hence
\[
\left< \phi^{(m^n)}, \chi^\lambda \right> = \displaystyle\frac1{n!}  \displaystyle\sum_{\ell =1}^n \displaystyle\frac{(n-1)!}{(n-\ell)!} \displaystyle\sum_{h \in S_{n-\ell}} (\Defres_{S_n} \chi^{\lambda}) (xh)
\]
where $x$ is the $\ell$-cycle $(1 \, 2 \, \ldots \, \ell)$.
We now apply Proposition~\ref{prop:defres_decomp} to see that
\[
\left< \phi^{(m^n)}, \chi^\lambda \right> =
 \displaystyle\frac1{n}  \displaystyle\sum_{\ell =1}^n \displaystyle\frac1{(n-\ell)!} \displaystyle\sum_{h \in S_{n-\ell}} 
\displaystyle\sum_{\mu} (\Defres_{S_\ell} \chi^{\lambda/\mu}) (x)
 (\Defres_{S_{n-\ell}} \chi^{\mu}) (h),
\]
where the sum is over partitions $\mu \subseteq \lambda$ with $|\lambda /\mu|=m \ell$.  
Since $x$ is an $\ell$-cycle, Proposition~\ref{prop:basecase} shows that $ (\Defres_{S_\ell} \chi^{\lambda/\mu}) (x)$ is
$\epsilon_\ell(\lambda/\mu)$ if  there exists an $m$-border strip tableau of shape $\lambda / \mu$ and type $(\ell)$ and is zero otherwise. Thus
\begin{eqnarray*}
\left< \phi^{(m^n)}, \chi^\lambda \right>& =&
 \displaystyle\frac1{n}  \displaystyle\sum_{\ell =1}^n 
\displaystyle\sum_{\mu}   \epsilon_\ell(\lambda/\mu)
\displaystyle\frac1{(n-\ell)!} \displaystyle\sum_{h \in S_{n-\ell}} 
 (\Defres_{S_{n-\ell}} \chi^{\mu}) (h) \\  & =& 
 \displaystyle\frac1{n}  \displaystyle\sum_{\ell =1}^n 
\displaystyle\sum_{\mu}   \epsilon_\ell(\lambda/\mu)
 \left< \Defres_{S_{n-\ell}} \chi^{\mu} ,  1_{S_{n-\ell}} \right>\\
&=&  \displaystyle\frac1{n} \displaystyle\sum_{\ell=1}^n \displaystyle\sum_{\mu}
\epsilon_\ell(\lambda / \mu) 
\left< \phi^{(m^{n-\ell})}, \chi^\mu \right>
\end{eqnarray*}
where, in each case, the second sum is over all partitions $\mu \subseteq \lambda $ for which there exists an $m$-border strip tableau of shape $\lambda / \mu$ and type $(\ell)$. 
\end{proof}

Proposition~\ref{prop:Foulkes} gives an algorithm for testing Foulkes' Conjecture for a single character $\chi^{\lambda}$ of $S_{mn}$ that is far faster
than more direct methods,
such as those requiring the character values of $\phi^{(m^n)}$ and $\phi^{(n^m)}$ to be calculated on all partitions of~$mn$. Timings suggest that it can be significantly faster
than the algorithm used by {\sc symmetrica} \cite{Kerber}, although some of this
gain comes at the expense of increased use of memory. For example,
to calculate all the multiplicities $\langle \phi^{(6^{11})}, \chi^\lambda \rangle$ for
$\lambda$ a partition of $66$ takes $34$ minutes using the Haskell \cite{PeytonJones}
implementation of Proposition~\ref{prop:Foulkes} available from the third
author's website\footnote{See \url{www.ma.rhul.ac.uk/~uvah099/}.}, compared to $350$ minutes for {\sc symmetrica} using the function
\verb+COMPLETE_COMPLETE_PLET+, both running on the same machine.

The graphs in Figures~\ref{fig:FoulkesGraph} and~\ref{fig:FoulkesGraphsComp}
show a number of intriguing features of the character multiplicities
appearing in Foulkes' Conjecture.
In particular, it seems plausible that if Foulkes' Conjecture is false, 
then a counterexample occurs when $m$ is near to $n$ and
the relevant partition is either very large or very small
in the lexicographic order on partitions of $mn$
with at most~$n$~parts.

\begin{figure}
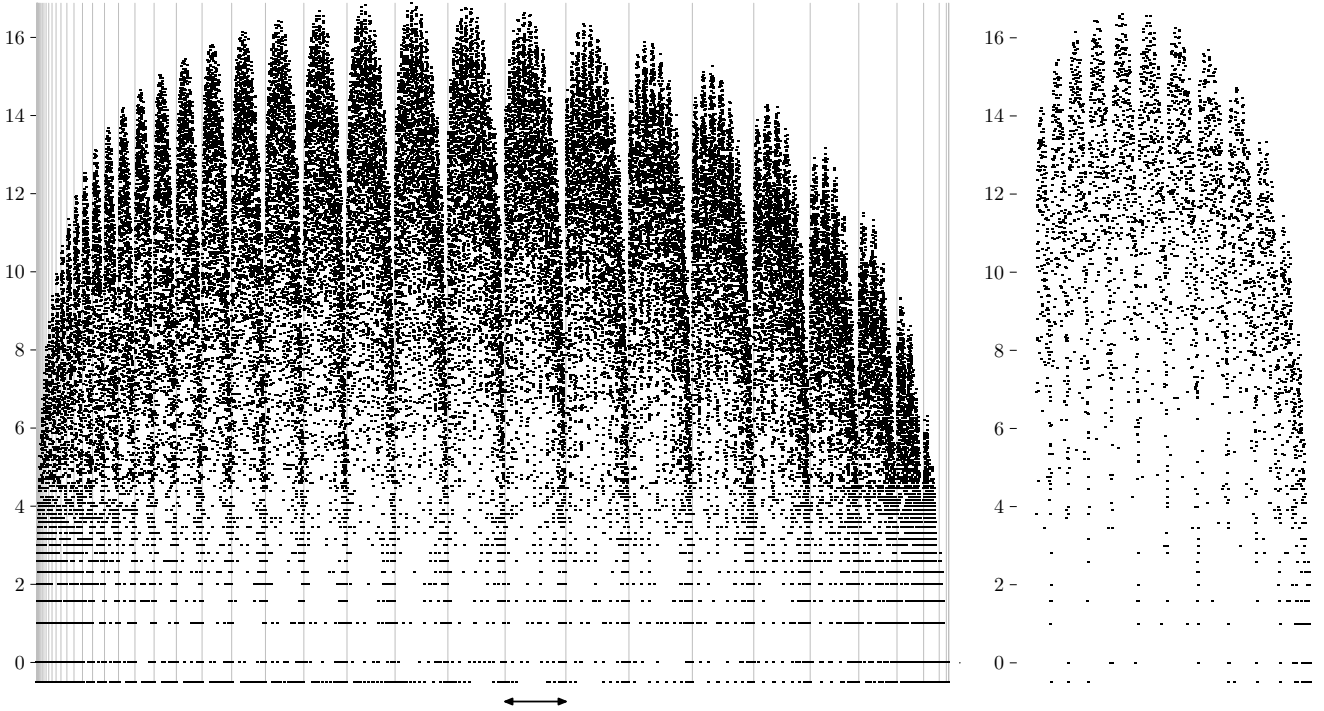

\bigskip 
\medskip
\makebox[\textwidth]{
\hskip0.5cm\scalebox{0.75}{\includegraphics{All78.pdf}
\raisebox{1pt}{\includegraphics{All78FirstPart19.pdf}}}} 
\caption{The left-hand graph shows $\log_2 \left< \phi^{(7^8)}, \chi^\lambda \right>$ for all
partitions of $56$ with at most $8$ parts. Partitions are ordered
lexicographically, with the smallest partition $(7^8)$ at the far
right. If the multiplicity is
zero then the point is  placed below the $x$ axis. Vertical lines
separate partitions with equal largest parts. The right-hand graph shows 
an enlarged view of the multiplicities for partitions with first part $19$;
the range of these partitions is indicated by the arrow in the left-hand graph.}
\label{fig:FoulkesGraph}
\end{figure}

\begin{figure}
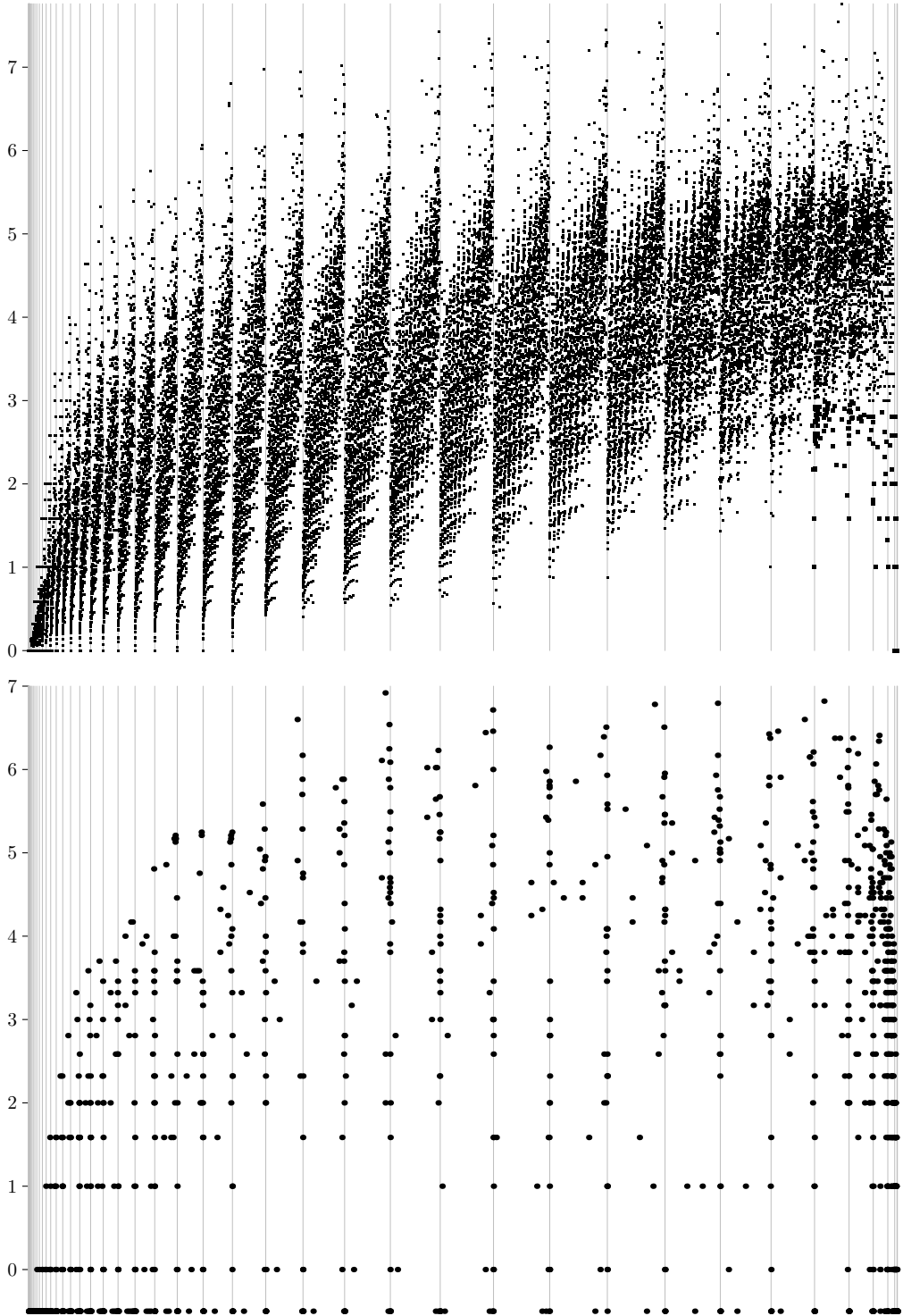

\setlength{\unitlength}{1.25cm}
\begin{picture}(10,16)
\put(-0.75,8){\scalebox{0.8}{\includegraphics{CompareSep78-0.pdf}}}
\put(-0.75,.2){\scalebox{0.8}{\includegraphics{CompareSep78-1.pdf}}}
\end{picture}
\caption{The top graph shows 
$\log_2 \bigl< \phi^{(7^8)}, \chi^\lambda \bigr> - \log_2 \bigl< \phi^{(8^7)}, \chi^\lambda\bigr>$ for 
the partitions of $56$ with at most $7$ parts for which the smaller multiplicity is non-zero.
To increase their visibility a small number of points have been enlarged.
Partitions are ordered lexicographically, with the smallest partition $(8^7)$ at the far
right. The lower graph shows $\log_2 \bigl< \phi^{(7^8)}, \chi^\lambda \bigr>$ for
those partitions for which $\bigl< \phi^{(8^7)}, \chi^\lambda\bigr> = 0$; if 
$\bigl< \phi^{(7^8)}, \chi^\lambda \bigr> = 0$
then the point is drawn below the axis.}
\label{fig:FoulkesGraphsComp}
\end{figure}

Using {\sc symmetrica}, Foulkes' Conjecture has been checked for
all $m$ and~$n$ with $m+n \le 17$ in~\cite{MN}.
Using Proposition~\ref{prop:Foulkes} and the software already
mentioned, we have extended this range. The relevant data
is available from the third author's 
website. 

\begin{corollary}\label{cor:computation}
If $m \le n$ and $m+n \le 19$ 
then
\[ \left< \phi^{(m^n)}, \chi^\lambda \right> \ge 
\left< \phi^{(n^m)}, \chi^\lambda \right> \]
for all partitions $\lambda$ of $mn$.\hfill\qed
\end{corollary}

\bigskip
\section{Generalized deflations}\label{sec:gendef}

In this section we discuss deflation with respect to an arbitrary irreducible character $\theta$ of $S_m$. In the case where $\theta$ is labelled by a hook partition, $\theta=\chi^{(a,1^b)}$ where $a+b=m$,  we give a combinatorial description, generalizing Theorem~\ref{thm:cd}.
We also prove some other general results, and show how these, 
together with the results of \S\ref{sec:proof}, may be used to calculate the values of an irreducible character of~$S_{mn}$ deflated with respect to an arbitrary character of $S_m$.

Firstly we deduce from Theorem~\ref{thm:cd} an analogous
result for deflations with respect to the sign character. 
As is usual, if $\lambda$ is a partition then we denote by $\lambda'$
the conjugate partition to $\lambda$. 

\begin{proposition}\label{prop:sgn}
Let $m$, $n \in \N$ and let $\lambda / \mu$ be a skew-partition of $mn$. If~$\gamma$ 
is a composition of $n$ and $g \in S_n$ has cycle type $\gamma$ then
\[ (\Defres_{S_n}^{\sgn_{S_m}})(g) = 
\begin{cases} a_{\lambda' / \mu', \gamma} & \text{if $m$ is even} \\
\sgn_{S_n}(g) \, a_{\lambda' / \mu', \gamma} & \text{if $m$ is odd}. 
\end{cases}  \]
\end{proposition}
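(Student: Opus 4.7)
My plan is to deduce Proposition~\ref{prop:sgn} from Theorem~\ref{thm:cd} via the identity $\chi^{\lambda/\mu} \otimes \sgn_{S_{mn}} = \chi^{\lambda'/\mu'}$, after comparing the two $1$-dimensional characters of $S_m \wr S_n$ given by $\widetilde{\sgn_{S_m}^{\times n}}$ and the restriction of $\sgn_{S_{mn}}$. Since Proposition~\ref{prop:deflation} expresses both $\Defres^{\sgn_{S_m}}_{S_n}$ and $\Defres_{S_n}$ as averages over the base group $B$ weighted by the relevant $\widetilde{\theta^{\times n}}$, such a comparison should reduce the statement to an instance of Theorem~\ref{thm:cd} applied to $\chi^{\lambda'/\mu'}$.

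First I would evaluate both characters on a generic element $(h_1,\ldots,h_n;g) \in S_m \wr S_n$. Because $\sgn_{S_m}$ is linear, the permutation of tensor factors used to build $\widetilde{\sgn_{S_m}^{\times n}}$ acts trivially on the one-dimensional tensor power, and a short calculation gives $\widetilde{\sgn_{S_m}^{\times n}}(h_1,\ldots,h_n;g) = \prod_i \sgn_{S_m}(h_i)$. For $\sgn_{S_{mn}}$ I would factor $(h_1,\ldots,h_n;g) = (h_1,\ldots,h_n;1)(1,\ldots,1;g)$ and use the explicit permutation action of $S_m \wr S_n$ on $\{1,\ldots,m\} \times \{1,\ldots,n\}$ recalled at the start of Section~3: the element $(1,\ldots,1;g)$ acts as $m$ disjoint copies of $g$, so a $k$-cycle of $g$ lifts to $m$ disjoint $k$-cycles and contributes $(-1)^{m(k-1)}$ to the sign. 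Summing over cycles of $g$ gives $\sgn_{S_{mn}}(1,\ldots,1;g) = \sgn_{S_n}(g)^m$, and combining with the obvious value $\sgn_{S_{mn}}(h_1,\ldots,h_n;1) = \prod_i \sgn_{S_m}(h_i)$ I obtain
\[ \widetilde{\sgn_{S_m}^{\times n}}(k\, ;g) = \sgn_{S_n}(g)^m\, \sgn_{S_{mn}}(k\, ;g) \qquad (k \in B,\ g \in S_n). \]

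Substituting this identity into the averaging formula from Proposition~\ref{prop:deflation} for $(\Defres^{\sgn_{S_m}}_{S_n}\chi^{\lambda/\mu})(g)$, the product $\chi^{\lambda/\mu}(k\, ;g)\sgn_{S_{mn}}(k\, ;g)$ becomes $\chi^{\lambda'/\mu'}(k\, ;g)$ and the factor $\sgn_{S_n}(g)^m$ pulls outside the sum. The $\theta = 1_{S_m}$ case of Proposition~\ref{prop:deflation} then identifies the remaining average as $(\Defres_{S_n}\chi^{\lambda'/\mu'})(g)$, which Theorem~\ref{thm:cd} evaluates as $a_{\lambda'/\mu',\gamma}$. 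The prefactor $\sgn_{S_n}(g)^m$ equals $1$ when $m$ is even and $\sgn_{S_n}(g)$ when $m$ is odd, giving exactly the two cases of the proposition. The only place requiring care is the computation of $\sgn_{S_{mn}}$ on wreath product elements, but this is routine once the action on $\{1,\ldots,m\}\times\{1,\ldots,n\}$ has been unpacked; the rest of the argument is essentially formal manipulation of the averaging formula.
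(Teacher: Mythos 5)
Your proof is correct and takes essentially the same route as the paper: the key point in both is the identity $\widetilde{\sgn_{S_m}^{\times n}} = \sgn_{S_n}^{\,m}\,\Res_{S_m\wr S_n}\sgn_{S_{mn}}$ (which the paper states as ``easily seen'' and you verify explicitly), followed by $\chi^{\lambda/\mu}\sgn_{S_{mn}} = \chi^{\lambda'/\mu'}$ and Theorem~\ref{thm:cd}. Your detour through the averaging formula of Proposition~\ref{prop:deflation} is only a pointwise restatement of the paper's operator identity $\Defres^{\sgn_{S_m}}_{S_n}\chi = \eta\,\Defres_{S_n}(\chi\,\sgn_{S_{mn}})$.
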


\begin{proof}
It is easily seen that
\[ \Res_{S_m \wr S_n} \sgn_{S_{mn}} = \begin{cases}
\widetilde{\sgn_{S_m}^{\times n}} & \text{if $m$ is even} \\[6pt]
\widetilde{\sgn_{S_m}^{\times n}}\, \sgn_{S_n} & \text{if $m$ is odd.}
\end{cases} \]
Hence if $\chi$
is any character of $S_{mn}$ then
\[ \Defres^{\sgn_{S_m}}_{S_n} \chi = \eta \Defres_{S_n} (\chi\, \sgn_{S_{mn}}) \]
where $\eta = 1_{S_n}$ if $m$ is even and $\eta = \sgn_{S_n}$ if $m$ is odd.
It follows from Theorems~7.15.6
 and~7.17.5 of \cite{StanleyII} that			
if $\lambda / \mu$ is a skew-partition of $mn$ then
$\chi^{\lambda / \mu}\, \sgn_{S_{mn}} = \chi^{\lambda' / \mu'}$. Therefore,
by Theorem~\ref{thm:cd},
we have 
\[ \Defres^{\sgn_{S_m}}_{S_n}(\chi^{\lambda'/\mu'}) = 
\eta(g) a_{\lambda' / \mu',\gamma},\] 
as required. 
\end{proof}

It would also have been possible to
prove Proposition~\ref{prop:sgn} directly
from Proposition~\ref{prop:gen_basecase}, by reasoning along the same lines as
the proof of Theorem~\ref{thm:cd} in \S 5.

We now turn our attention to the case where $\theta=\chi^{(a,1^b)}$ for a hook partition $ (a,1^b)$. For convenience, we use the language of skew shapes (see, for example, \cite{Zelevinsky1981}). 
A {\it skew shape} is a finite subset $\kappa$  of $\mathbb{N}\times\mathbb{N}$ that is convex with respect to the partial order $\le_p$ defined by $ (i,j) \le_p (i',j')$  if and only if  $ i\le i'$ and $ j\le j'$.
We identify the 
 skew-partition $\lambda / \mu$ in which $\lambda$ has $t$ parts 
 with the skew shape
$\{ (i,j) : 1 \le j \le t, \mu_i+1 \le j \le \lambda_i\}$. 
We define $\chi^\kappa = \chi^{\lambda / \mu}$ and
$\nsgn_n(\kappa) = \nsgn_n(\lambda / \mu)$.

Suppose $\kappa$ is a skew shape. Then we define the {\it initial box} of $\kappa$ to be $(i_\kappa, j_\kappa)$ where  $(i_\kappa, j_\kappa) \in \kappa$ and, for any $i \le i_\kappa$ and $j \ge j_\kappa$, if $(i,j) \in \kappa$ then  $i = i_\kappa$ and $j = j_\kappa$.
Similarly, we define the  {\it terminal box} of $\kappa$ to be $(k_\kappa, \ell_\kappa)$ where  $(k_\kappa, \ell_\kappa) \in \kappa$ and, 
for any $k \ge k_\kappa$ and $\ell \le \ell_\kappa$, if $(k,\ell) \in \kappa$ then  $k = k_\kappa$ and $\ell = \ell_\kappa$. 
The initial and terminal boxes exist due to the convexity of $\kappa$.  
For all $(i,j) \in \kappa$, $i_\kappa \le i \le k_\kappa$ and $\ell_\kappa \le j \le j_\kappa$.

We use the term {\it border strip} in this context to mean a connected skew shape which contains no $2 \times 2$ square.
We say that $D$ is a {\it  border-strip $n$-diagram}
if $D$ is a finite set of disjoint border strips, each of length $n$,
such that $\bigcup D$ is a skew shape. We say that $D$ is a {\it  horizontal border-strip $n$-diagram} if whenever $(i_\rho,j_\rho)$ is the initial box of some border strip $\rho \in D$, we have $(i,j_\rho) \notin \bigcup D$ for all $i<i_\rho$. Similarly,  $D$ is a {\it  vertical border-strip $n$-diagram} if whenever $(k_\rho,\ell_\rho)$ is the terminal box of some border strip $\rho \in D$, we have $(k_\rho, \ell) \notin \bigcup D$ for all $\ell<\ell_\rho$.

We define a
relation $\mathcal{R}$ on the set of border strips of length~$n$
by $(\rho_1, \rho_2) \in \mathcal{R}$ if $\rho_1$ and $\rho_2$ are disjoint border strips of length $n$ and there exist $z \in \rho_1$ and $w \in \rho_2$ such that $z<_p w$. Observe that 
when $n\ge 3$ the relation $\mathcal{R}$ is not transitive.
We can now state 
the following combinatorial definition, 
which is illustrated in Figure~\ref{fig:skewDiag} overleaf.

\begin{definition}\label{def:hook-like}
Let $\kappa$ be a skew shape of size~$mn$,  $a,b$ be positive integers such that $a+b=m$, and $D$ and $E$ be two border-strip $n$-diagrams such that $\kappa=\bigcup (D\cup E)$. We say that 
$(D,E)$ is an \emph{$(a,1^b)$-like border-strip $n$-diagram of shape $\kappa$} if the following conditions are satisfied:
\begin{enumerate} 
 \item $|D| = a$ and $|E| = b+1$;
 \item $D\cap E = \{ \sigma \}$ where $\sigma$ is border strip of length $n$ which contains the initial box of $\kappa$;
 \item $D$ is a horizontal border-strip $n$-diagram, and $E$ is a vertical border-strip $n$-diagram;
 \item there do not exist $\rho_D\in D$ and $\rho_E \in E$ such that $(\rho_E, \rho_D) \in \mathcal{R}$. 
\end{enumerate}
We denote the set of  $(a,1^b)$-like border-strip $n$-diagrams of shape $\kappa$ by $\mathcal{B}_{a,b}^\kappa$.
\end{definition}

\begin{figure}[t]
\includegraphics{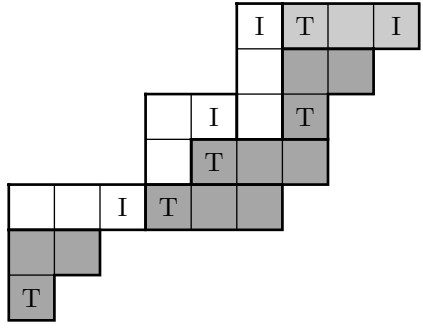}
\caption{A $(4,1^4)$-like border-strip $3$-diagram of shape $(9,8,7,7,6,2,1) /     
(5,5,3,3)$. Ribbons in the horizontal border-strip $3$-diagram are shown in light grey
and white, with their initial boxes labelled $I$; 
ribbons in the vertical border-strip $3$-diagram are shown in light grey and
dark grey with their terminal boxes labelled $T$.}
\label{fig:skewDiag}
\end{figure}

We note that, by a simple counting argument, whenever $(D,E)\in \mathcal{B}_{a,b}^{\kappa}$, 
the elements of $D$ are pairwise disjoint, as well as those of $E$, and we have $(\bigcup D) \cap (\bigcup E) = \sigma$.
Definition~\ref{def:hook-like} is inspired by the Littlewood--Richardson Rule 
 (see \cite[2.8.13]{JK} or \cite[A1.3.3]{StanleyII}) 
 in the case of a hook partition $(a,1^b)$. In this case the rule states
that the multiplicity of $\chi^{(a,1^b)}$ in $\chi^\kappa$
is the number of ways to represent $\kappa$ as a union of two skew shapes 
$\alpha$ and $\beta$ satisfying the following: 
(1) the size of $\alpha$ is $a$ and the size of $\beta$ is $b+1$; (2) $\alpha\cap \beta= \{ x \}$ where $x$ is the initial box of $\kappa$; (3) $\alpha$ is a horizontal strip and $\beta$ is a vertical strip; (4) there do not exist $y\in \alpha$ and $z\in \beta$ such that $z<_{p} y$.
Thus 
\[ (\Def_{S_1}^{\chi^{(a,1^b)}}\!\chi^\kappa)(1_{S_1}) = | \mathcal{B}^\kappa_{a,b}|. \]
This is the case $n=1$ of the following theorem. 	

\begin{theorem}\label{thm:hookdefl}
Let $\kappa$ be a skew shape of size~$mn$. Let $a,b$ be positive integers such that $a+b=m$.  Let $g\in S_n$ be an $n$-cycle.
Then 
\[ (\Defres^{\chi^{(a,1^b)}}_{S_n} \!\chi^{\kappa})(g) = \epsilon_n(\kappa) |\mathcal{B}_{a,b}^\kappa|. \]
\end{theorem}

We observe that an $(m)$-like border-strip $n$-diagram of shape $\kappa$ may be viewed as an $m$-border-strip tableau of shape $\kappa$ and type $(n)$ by labelling the border strips 
in the unique way so that the condition on row numbers in Definition~\ref{def:a} holds.
Thus Theorem~\ref{thm:hookdefl} is 
a generalization of Theorem~\ref{thm:cd} in the case of an $n$-cycle.

\begin{example}\label{example:hook}
We compute $(\Defres^{\chi^{(3,1)}}_{S_3} \chi^{(4,4,4)})(g)$ where $g$ is a 3-cycle. There are two $(3,1)$-like border-strip $3$-diagrams of shape $(4,4,4,4)$, namely $( \{\rho_1, \rho_2, \rho_3\}, \{\rho_3, \rho_4\})$ and
 $( \{\rho_5, \rho_6, \rho_7\}, \{\rho_7, \rho_8\})$, where $\rho_i$ denotes the border strip consisting of those boxes labelled by $i$ in the tableaux below.
$$ \young(1223,1233,1444)\ ,  \quad \young(5677,5678,5688)\ . $$ 
Since $\epsilon_3(4,4,4)=1$, Theorem~\ref{thm:hookdefl} implies that $(\Defres^{\chi^{(3,1)}}_{S_3} \chi^{(4,4,4)})(g)=2$.
\end{example}

The proof of Theorem~\ref{thm:hookdefl} requires the following definition, 
in which we assume that $a+b=m$. 

\begin{definition}\label{def:D_a,b}
Let $\kappa$ be a skew shape of size $mn$. Define $\mathcal{D}^\kappa_{a,b}$ to be the set of pairs 
$(D,E)$ where $D$ and $E$ are border-strip $n$-diagrams such that $\kappa=\bigcup (D\cup E)$ and 
 the following conditions are satisfied:       
\begin{enumerate} 
 \item[($1'$)] $|D| = a$ and $|E| = b$; 
 \item[(3)] $D$ is a horizontal border-strip $n$-diagram, and $E$ is a vertical border-strip $n$-diagram;
 \item[(4)] there do not exist $\rho_D\in D$ and $\rho_E \in E$ such that $(\rho_E, \rho_D) \in \mathcal{R}$. 
\end{enumerate}
\end{definition}

Note that since $|\kappa| = mn$, the border-strips in $D$ and $E$ are necessarily disjoint.
We  need the following two lemmas on $\mathcal{D}^\kappa_{a,b}$; their proofs are given after the proof of
Theorem~\ref{thm:hookdefl}.

\begin{lemma}\label{lemma:D_a,b}
Let $\kappa$ be a skew shape of size~$mn$, and let $a,b$ be positive integers such that $a+b=m$. Then
\[
\lan \omega(\Res_{S_m \wr S_n} \chi^\kappa), \ \Ind_{S_a \times S_b}^{S_m} (\chi^{(a)} \times \chi^{(1^b)}) \ran = \epsilon_n(\kappa) | \mathcal{D}^\kappa_{a,b}|,
\]
\end{lemma}

\begin{lemma}\label{lemma:DandB}
Let $\kappa$ be a skew shape of size~$mn$. Let $0\le a <m $ and $b=m-a$. Then
\[
 |\mathcal{D}^{\kappa}_{a,b}| = |\mathcal{B}^{\kappa}_{a,b}| + |\mathcal{B}^{\kappa}_{a+1,b-1}|.
\]
\end{lemma}

\begin{proof}[Proof of Theorem~\ref{thm:hookdefl}]
The proof is by induction on $b$. Recall that $g$ denotes an $n$-cycle.
In the case $b=0$ we have
\[ (\Defres^{\chi^{(m)}}_{S_n} \chi^{\kappa})(g)= a_{\kappa, (n)}= \epsilon_n(\kappa) | \mathcal{D}^\kappa_{m,0}|=\epsilon_n(\kappa) | \mathcal{B}^\kappa_{m,0}|, \]
where the first equality follows from Theorem~\ref{thm:cd} and  the second by 
Lemma~\ref{lemma:horiz}.

For $b>0$, we combine Proposition~\ref{prop:omega} with Lemmas~\ref{lemma:D_a,b} and~\ref{lemma:DandB}, Young's Rule and the inductive hypothesis to get
\begin{align*}
(\Defres^{\chi^{(a ,1^{b})}}_{S_n} \chi^{\kappa}) (g) &=
 \lan \omega (\Res_{S_m \wr S_n} \chi^{\kappa}), \chi^{(a,1^b)} \ran \\
 & =    \lan \omega (\Res_{S_m \wr S_n} \chi^{\kappa}), \Ind_{S_a \times S_b}^{S_m} (\chi^{(a)} \times \chi^{(1^b)}) - \chi^{(a+1,1^{b-1})} \ran\\
& =  \epsilon_n(\kappa) |\mathcal{D}^\kappa_{a,b}|- (\Defres^{\chi^{(a+1 ,1^{b-1})}}_{S_n} \chi^{\theta}) (g)\\
& =  \epsilon_n(\kappa) |\mathcal{D}^\kappa_{a,b}| -  \epsilon_{n} (\kappa) |\mathcal{B}^{\kappa}_{a+1,b-1}|\\
 & =   \epsilon_{n} (\kappa) |\mathcal{B}^{\kappa}_{a ,b}|.
\end{align*}
as required.
\end{proof}

It remains to demonstrate the truth of the two lemmas.

\begin{proof}[Proof of Lemma~\ref{lemma:D_a,b}]
Let $g$ denote an $n$-cycle. In the case $b=0$, 
\[
\lan \omega(\Res_{S_m \wr S_n} \chi^\kappa), \ \chi^{(m)} \ran = (\Defres _{S_n} \chi^{\kappa}) (g)=a_{\kappa, (n)}=\epsilon_n(\kappa)|\mathcal{D}^{\kappa}_{m ,0}|
\]
by Proposition~\ref{prop:omega}, Theorem~\ref{thm:cd} and Lemma~\ref{lemma:horiz}.
Similarly, the case $a=0$ follows using Proposition~\ref{prop:sgn}:
\[
\lan \omega(\Res_{S_m \wr S_n} \chi^\kappa), \ \chi^{(1^m)} \ran = (\Defres^{\sgn_{S_m}} _{S_n} \chi^{\kappa}) (g) =\epsilon_n(\kappa)|\mathcal{D}^{\kappa}_{0 ,m}|.
\]

In the general case, let $\kappa=\lambda / \mu$. By Frobenius reciprocity and Lemma~\ref{lemma:skewres},
\[ \begin{split}
\lan \omega(\Res_{S_m \wr S_n} \chi^\kappa), \ \Ind_{S_a \times S_b}^{S_m} (\chi^{(a)} \times \chi^{(1^b)}) \ran 
= \hskip2in \\ 
\hskip1in \lan  \sum_\tau \omega(\Res_{S_a \wr S_n} \chi^{\tau / \mu}) \times \omega(\Res_{S_b \wr S_n} \chi^{\lambda / \tau}), \chi^{(a)} \times \chi^{(1^b)} \rangle, 
\end{split} \] 
where the sum is over all partitions $\tau$ such that
 $\mu \subseteq \tau \subseteq \lambda$ and $|\tau / \mu| = an$.
Using the two extreme cases, it follows that
\begin{align*}
\lan \omega(\Res_{S_m \wr S_n} \chi^\kappa), \ \Ind_{S_a \times S_b}^{S_m} (\chi^{(a)} \times \chi^{(1^b)}) \ran &=   \sum_\tau \epsilon_n(\tau / \mu) |\mathcal D^{\tau / \mu}_{a,0}| \epsilon_n(\lambda / \tau) |\mathcal D^{\lambda / \tau}_{0,b}|\\
 & = \epsilon_n(\lambda / \mu) \sum_\tau |\mathcal D^{\tau / \mu}_{a,0} | \, |\mathcal D^{\lambda / \tau}_{0,b}|\\
&=   \epsilon_n(\lambda / \mu) |\mathcal D^{\lambda/ \mu}_{a,b}|
\end{align*}
as required.
\end{proof}

The proof of Lemma~\ref{lemma:DandB} relies upon the following simple result.

\begin{lemma}\label{lemma:compar}
 Let $\rho_1$ and $\rho_2$ be disjoint border strips, each of length~$n$, such that $(\rho_1, \rho_2) \in \mathcal{R}$. 
For $t\in \{1,2\}$, let $(i_t,j_t)$ and $(k_t,\ell_t)$ be  the initial and terminal boxes  respectively of $\rho_t$.
Then 
\begin{enumerate}
 \item There exists $(r,s)\in \rho_2$ such that
either $r \ge i_1$ and $s> j_1$, or $r> k_1$ and $s\ge \ell_1$.  
\item There exists $(t,q)\in \rho_1$ such that either
$t< i_2$ and $q\le j_2$, or $t \le k_2$ and $q<\ell_2$. 
\end{enumerate}
\end{lemma}

\begin{proof}
 We   prove the first statement only; the second is entirely analogous.
Suppose that $(\rho_1, \rho_2) \in \mathcal{R}$ but there exists no  $(r,s)\in \rho_2$   satisfying the stated conditions.
Since $(\rho_1, \rho_2)\in  \mathcal{R}$, there exist $(a,b ) \in \rho_1$ and $(e,f)\in \rho_2$ such that $(a,b) <_p (e,f)$. In particular, $i_1 \le e$ and $\ell_1\le f$, and our assumption implies that $f\le j_1$ and $e\le k_1$.  
Thus $(e,f)$ belongs to the rectangle $[i_1,k_1]\times [\ell_1,j_1]$.
The ribbon $\rho_1$ divides its complement in $[i_1,k_1]\times [\ell_1,j_1]$ into two connected components, with $(e,f)$ lying to the south east of $\rho_1$ (as $(a,b) \in \rho_1$ satisfies $(a,b)  <_p (e,f)$).

Let $\tau = \{ (c, j_1+1) : i_1\le c\le k_1+1\} \cup \{ (k_1+1, d) : \ell_1\le d\le j_1+1 \}$, and observe that our assumption ensures that $\tau \cap \rho_2=\varnothing$.  The set $\rho_1 \cup \tau$ is the boundary of a certain region $\Delta$. Since $(e,f)\in \Delta$ and $\rho_2$ does not intersect the boundary, the whole border strip $\rho_2$ must be contained in $\Delta$ and, in particular,   $\rho_2 \subset [i_1,k_1]\times [\ell_1,j_1]$.
However, as $\rho_1$ is a border strip of the same length with initial and terminal boxes  $(i_1,k_1)$ and $(\ell_1,j_1)$ respectively,  $\rho_2$ must
contain the corners $(i_1,k_1)$ and $(\ell_1,j_1)$ and hence intersect $\rho_1$, contrary to our hypotheses.
\end{proof}

This lemma can be used to verify that if $D$ is a horizontal border-strip $n$-diagram with initial box $(i_\sigma, j_\sigma)\in \sigma \in D$ then $(\sigma, \rho) \notin \mathcal{R}$ for all $\rho \in D$.
Indeed, if 
$(\sigma, \rho) \in \mathcal{R}$ then by Lemma~\ref{lemma:compar}(1) 
there exists $(r,s) \in \rho$ such that
either $r \ge i_\sigma$ and $s > j_\sigma$, or
$r>k_\sigma$ and $s \ge \ell_\sigma$.
The `either' case is impossible because  
$(i_\rho, j_\rho)$ is the initial box of $D$.
Hence $(i_\rho, j_\rho)$ must lie in  $[i_\sigma,r]\times [s,j_\sigma]$, to the south of $\sigma$, and so a box of $\sigma$ lies above   $(i_\rho, j_\rho)$, contradicting the horizontality of $D$.
Similarly,  if $E$ is a vertical border-strip $n$-diagram 
with initial box $(i_{\sigma}, j_{\sigma}) \in \sigma \in E$
 then $(\rho,\sigma) \notin \mathcal{R}$ for all $\rho \in E$: indeed if 
$(\rho,\sigma) \in \mathcal{R}$ then by Lemma~\ref{lemma:compar}(2) 
there exists $(t,q) \in \rho$ such that
either $t < i_\sigma$ and $q \le j_\sigma$, or
$t\le k_\sigma$ and $q < \ell_\sigma$. The `either' case
is again ruled out because $(i_{\sigma}, j_{\sigma})$ is the initial box of $E$. 
Hence 
$$(t,q) <_p (k_\sigma,q) \le_p (k_\sigma, \ell_\sigma),$$ 
and so $(k_{\sigma}, q)$ is a box of $E$ because $\bigcup E$ is convex. 
But this box lies to the left of the terminal box of $\sigma$, a contradiction.
We shall use these observations in the proof of Lemma~\ref{lemma:DandB}.

\begin{proof}[Proof of Lemma~\ref{lemma:DandB}]
We construct a bijection
$$f \colon \mathcal{B}^{\kappa}_{a,b} \sqcup \mathcal{B}^{\kappa}_{a+1,b-1}  
\to \mathcal{D}^{\kappa}_{a,b}.$$
Given $(D,E) \in \mathcal{B}^{\kappa}_{a,b} \sqcup \mathcal{B}^{\kappa}_{a+1,b-1}$, let $\sigma$ denote the unique element of $D\cap E$. For $(D,E) \in \mathcal{B}^{\kappa}_{a,b}$, we set 
$f(D,E)=(D,E \setminus \{\sigma\})$, and
for $(D,E) \in \mathcal{B}^{\kappa}_{a+1,b-1}$ we set 
$f(D,E)=(D\setminus \{\sigma\}, E)$.

Firstly, we verify that $f(D,E) \in \mathcal{D}^{\kappa}_{a,b}$. Suppose   
that $(D,E) \in \mathcal{B}^{\kappa}_{a,b}$ (as the second case is exactly analogous). 
The conditions ($1'$), (3), (4) 
on \hbox{$(D,E \setminus \{\sigma\})$} 
follow immediately
provided that $\bigcup\, (E \setminus \{\sigma\})$ is a skew shape.  Take $x<_p y<_p z$ with 
$x,z \in \bigcup\, (E \setminus \{\sigma\})$. We know $y$ lies in the skew shape $\bigcup E$, so 
suppose that $y \in \sigma$. Then if $z \in \rho_E \in E \setminus \{\sigma \}$
we have $(\sigma, \rho_E) \in \mathcal{R}$, contrary to 
condition (4) of Definition~\ref{def:hook-like}.

To see that $f$ is the bijection we require, we define its inverse map			
$$h \colon   \mathcal{D}^{\kappa}_{a,b} \to  \mathcal{B}^{\kappa}_{a,b} \sqcup \mathcal{B}^{\kappa}_{a+1,b-1}.$$
For $(D,E) \in \mathcal{D}^{\kappa}_{a,b}$, let $\sigma$ denote the border strip of $D \cup E$ containing the initial box of $\kappa$. Then if $\sigma \in D$ we set $h(D,E)=(D, E \cup \{\sigma\})$ and if $\sigma \in E$ we set $h(D,E)=(D \cup \{\sigma\}, E)$.

We verify that $h(D,E) \in  \mathcal{B}^{\kappa}_{a,b} \sqcup \mathcal{B}^{\kappa}_{a+1,b-1}$. 
Suppose $\sigma \in D$.  To see that $\bigcup\,(E \cup \{\sigma\})$ is a skew shape, take $x <_p y<_p z$ with $x ,z \in\bigcup\,( E \cup \{\sigma\})$. If $x,z$ lie in the skew shape $\bigcup E$ then so does $y$, and similarly if  $x,z \in \sigma$ then $y \in \sigma$. Two cases remain. Firstly, if $x \in \rho \in E$ and $z \in \sigma \in D$ then $(\rho, \sigma) \in \mathcal{R}$, contrary to the definition of $ \mathcal{D}^{\kappa}_{a,b}$. 
Secondly, suppose that $x \in \sigma \in D$ and $z   \in \bigcup E$,
and, for a contradiction, that
$y \in \rho \in D \setminus\{ \sigma\}$. Then $(\sigma, \rho) \in \mathcal{R}$, which 
is impossible by the first observation following Lemma~\ref{lemma:compar}.
(The proof that $D \cup \{\sigma\}$ is a skew shape
in the case $\sigma \in E$  is entirely analogous.)

Next we verify that if $\sigma \in D$ then $E \cup \{\sigma \}$ is a vertical $n$-border strip diagram. Let $(i_\sigma, j_\sigma)$ be the initial box of $\sigma$
and let $(k_\sigma, \ell_\sigma) \in \sigma$ be the terminal box of $\sigma$.
Since $E$ is a vertical $n$-border strip diagram,
it suffices to check firstly that there is no box $(k_\sigma, \ell) \in \rho_E \in E$ for $\ell <\ell_\sigma$, and secondly that, if $\rho \in E$ has terminal box $(k_\rho, \ell_\rho)$,
then $(k_\rho, j) \notin \sigma$ for any $j<\ell_\rho$.
The first statement is a consequence of the definition of  $ \mathcal{D}^{\kappa}_{a,b}$ since the existence of such a box implies $(\rho_E,\sigma) \in \mathcal{R}$.  
If the second statement fails 
with $(k_\rho, j) \in \sigma$, then $(\sigma, \rho) \in \mathcal{R}$ and 
Lemma~\ref{lemma:compar}(1) 
implies that there is a box $(r,s) \in \rho$ with either $r \ge i_\sigma$ and $s > j_\sigma$, or
$r>k_\sigma$ and $s \ge \ell_\sigma$. The `either' case is ruled out
because $(i_\sigma, j_\sigma)$ is the initial box of $\sigma$. 
In the `or' case, since  $(k_\rho, j) \in \sigma$, we have
$ k_\sigma\ge k_\rho$, and therefore $r> k_\sigma\ge k_\rho$, contradicting that $(r,s) \in \rho$. (The proof that if  $\sigma \in E$ then $D \cup \{\sigma \}$ is a horizontal $n$-border strip diagram is  simpler, 
using the definition of  $ \mathcal{D}^{\kappa}_{a,b}$  and the fact that $(i_\sigma, j_\sigma)$ is the initial box of $\theta$.)

Finally, we must check that if $\sigma \in D$ then there do not exist $\rho \in D$ and $\rho' \in E \cup \{\sigma \}$ with $(\rho', \rho) \in \mathcal{R}$.
This is true because $(D,E) \in \mathcal{D}^{\kappa}_{a,b}$,  and, by the observation following Lemma~\ref{lemma:compar}, for all $\rho \in D$,   $(\sigma, \rho) \notin \mathcal{R}$.
(Again, the case $\sigma \in E$ is similar).  

We have demonstrated that $f$ and $h$ are well-defined, and by their construction the maps are 
mutually inverse.
\end{proof}

This completes the proof of Theorem~\ref{thm:hookdefl} on deflation with respect to hook characters.
We now give some results on 
$\Defres^\theta_{S_n} \chi^{\lambda}$ for  an  arbitrary irreducible character~$\theta$.

 Proposition~\ref{prop:gen_basecase} combined with the Littlewood--Richardson rule 
 yields the following corollary, which
gives a useful sufficient condition for the deflation of an irreducible character of $S_{mn}$ to vanish on an $n$-cycle.

\begin{corollary} \label{cor:nv}
Let $m$, $n \in \N$, let $\lambda$ be a partition of $mn$.
Let $\beta$ be a partition of $m$ and let $\theta = \chi^\beta$.
Let $g \in S_n$ be an $n$-cycle.
If $(\Defres^\theta_{S_n} \chi^{\lambda})(g) \not= 0$ 
then $\lambda$ has empty $n$-core and moreover,
$\lambda^{(i)} \subseteq \beta$ for each $i \in \{0,\ldots,n-1\}$,
where
 $(\lambda^{(0)}, \ldots, \lambda^{(n-1)})$ is the $n$-quotient of $\lambda$. 
In this case  
$$(\Defres^\theta_{S_n} \chi^{\lambda})(g) = \nsgn_n(\lambda / \emptyset) c^\beta_{\lambda^{(0)}\ldots\lambda^{(n-1)}}$$
where  $c^\beta_{\lambda^{(0)}\ldots\lambda^{(n-1)}}$ denotes a generalized Littlewood--Richardson coefficient.
\end{corollary}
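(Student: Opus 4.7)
The plan is to derive this corollary directly from Proposition~\ref{prop:gen_basecase} by specializing to $\mu = \emptyset$ and $\theta = \chi^\kappa$, then unpacking the inner product via Frobenius reciprocity and the iterated Littlewood--Richardson rule.

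First I would observe that $\lambda / \emptyset$ is $n$-decomposable precisely when $\lambda$ has empty $n$-core. This is immediate from Definition~\ref{def:n_dec} and the abacus: performing $m$ upward single-bead moves to reach the empty partition is possible if and only if every bead can be slid to the top of its runner. Hence Proposition~\ref{prop:gen_basecase} already delivers $(\Defres^\theta_{S_n}\chi^\lambda)(g) = 0$ whenever $\lambda$ has nonempty $n$-core, which is the first necessary condition in the corollary.

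Now assume $\lambda$ has empty $n$-core. The $n$-quotient of $\emptyset$ is the $n$-tuple of empty partitions, so $\lambda^{(i)} / \mu^{(i)} = \lambda^{(i)}$ and $\ell_i = |\lambda^{(i)}|$ in the notation of \S\ref{subsec:quotients}. Proposition~\ref{prop:gen_basecase} then gives
\[
(\Defres^\theta_{S_n}\chi^\lambda)(g) = \nsgn_n(\lambda/\emptyset)\left\langle \Ind^{S_m}_H \bigl(\chi^{\lambda^{(0)}}\times\cdots\times\chi^{\lambda^{(n-1)}}\bigr),\chi^\kappa\right\rangle,
\]
where $H = S_{\ell_0} \times \cdots \times S_{\ell_{n-1}}$. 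By Frobenius reciprocity, this inner product equals
\[
\left\langle \chi^{\lambda^{(0)}}\times\cdots\times\chi^{\lambda^{(n-1)}},\,\Res^{S_m}_H \chi^\kappa\right\rangle.
\]
Applying the Littlewood--Richardson rule iteratively (reducing $\Res^{S_m}_{S_{\ell_0}\times S_{m-\ell_0}}\chi^\kappa$ first, then restricting the right-hand factor further, and so on) decomposes $\Res^{S_m}_H\chi^\kappa$ as a nonnegative integer combination of characters $\chi^{\nu_0}\times\cdots\times\chi^{\nu_{n-1}}$ with multiplicities $c^\kappa_{\nu_0\ldots\nu_{n-1}}$, the generalized Littlewood--Richardson coefficients. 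Extracting the coefficient of $\chi^{\lambda^{(0)}}\times\cdots\times\chi^{\lambda^{(n-1)}}$ yields the claimed formula.

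Finally, the containment conditions $\lambda^{(i)} \subseteq \kappa$ follow because $c^\kappa_{\nu_0\ldots\nu_{n-1}}$ is zero unless each $\nu_i \subseteq \kappa$; this is standard for ordinary Littlewood--Richardson coefficients, and the iterated version inherits it at each stage of the reduction. There is no real obstacle in this argument: the only delicate point is to confirm that the multi-factor form of the Littlewood--Richardson rule does produce exactly the generalized coefficient $c^\kappa_{\lambda^{(0)}\ldots\lambda^{(n-1)}}$ (and to check that its vanishing pattern forces $\lambda^{(i)} \subseteq \kappa$), both of which are classical facts cited in the paper's references.
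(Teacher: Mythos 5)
Your proposal is correct and follows essentially the same route as the paper, which derives the corollary in exactly this way: specialize Proposition~\ref{prop:gen_basecase} to $\mu=\emptyset$ (noting that $n$-decomposability of $\lambda/\emptyset$ is equivalent to $\lambda$ having empty $n$-core) and then identify the resulting inner product with the generalized Littlewood--Richardson coefficient, whose vanishing unless each $\lambda^{(i)}\subseteq\kappa$ gives the containment condition. The paper leaves these steps as a one-line remark citing the Littlewood--Richardson rule; your write-up simply makes them explicit.
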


A related result is Proposition~\ref{prop:gen_deg} below, which 
gives the degrees of the deflations of the irreducible
characters of $S_{mn}$ to $S_n$. It may be proved in the
same way as Equation~\eqref{eq:deflation_special} in \S 1. 
Note that the right-hand side equals the generalized Littlewood--Richardson coefficient
$c^\lambda_{\beta\ldots\beta}$.

\begin{proposition}\label{prop:gen_deg} 
Let $m$, $n \in \N$.
Let $\beta$ be a partition of $m$ and let $\theta = \chi^\beta$. Then
\[
\Defres^{\theta}_{S_n}(\chi^\lambda)(1_{S_n}) = 
\left<
\Ind^{S_{mn}}_{S_m \times \cdots \times S_m} \chi^{\beta} \times \cdots \times \chi^\beta, 
 \chi^\lambda
\right> \]
for any partition $\lambda$ of $mn$.\hfill$\qed$
\end{proposition}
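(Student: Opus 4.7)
The plan is to follow the template set by the derivation of Equation~\eqref{eq:deflation_special} in \S\ref{sec:intro}, now weighted by $\theta = \chi^\kappa$ instead of the trivial character. Let $B = S_m \times \cdots \times S_m$ denote the base group of $S_m \wr S_n$ and write $\theta^{\boxtimes n} = \theta \times \cdots \times \theta$ for the external tensor product (an irreducible character of $B$). The key claim to establish first is the identity
\[ (\Def^\theta_{S_n} \psi)(1_{S_n}) = \bigl\langle \Res_B \psi,\, \theta^{\boxtimes n} \bigr\rangle_B \]
for every character $\psi$ of $S_m \wr S_n$.

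To prove this identity, by linearity it suffices to check it on an arbitrary irreducible $\xi \in \Irr(S_m \wr S_n)$. I would split into two cases. If $\xi = \widetilde{\theta^{\times n}}\Inf_{S_n}^{S_m \wr S_n}\chi^\nu$ for some partition $\nu$ of $n$, then $\Res_B \widetilde{\theta^{\times n}} = \theta^{\boxtimes n}$, and the inflation is trivial on $B$, so $\Res_B \xi = \chi^\nu(1)\,\theta^{\boxtimes n}$. Since $\theta^{\boxtimes n}$ is irreducible, this gives $\langle \Res_B \xi, \theta^{\boxtimes n}\rangle = \chi^\nu(1) = \chi^\nu(1_{S_n}) = (\Def^\theta_{S_n}\xi)(1_{S_n})$. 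If $\xi$ is not of this form, then $\Def^\theta_{S_n}\xi = 0$ by definition, and I must show the inner product vanishes as well; but this is precisely the content of the opening argument of Lemma~\ref{lemma:sum} (via \cite[Lemma 8.14(b)]{IsaacsChars} and Frobenius reciprocity), which shows that any irreducible constituent of $\Ind_B^{S_m \wr S_n} \theta^{\boxtimes n}$ must have the required special form.

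With the identity in hand, I would apply it to $\psi = \Res^{S_{mn}}_{S_m \wr S_n} \chi^\lambda$, obtaining
\[ \bigl(\Defres^\theta_{S_n}\chi^\lambda\bigr)(1_{S_n}) = \bigl\langle \Res_B \chi^\lambda,\, \chi^\kappa \times \cdots \times \chi^\kappa \bigr\rangle_B. \]
Frobenius reciprocity applied to the inclusion $B \leq S_{mn}$ then rewrites the right-hand side as $\langle \chi^\lambda,\, \Ind_{S_m \times \cdots \times S_m}^{S_{mn}} \chi^\kappa \times \cdots \times \chi^\kappa\rangle$, which is exactly the quantity in the proposition.

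No real obstacle is expected: the only subtlety is recognising that the case distinction of the opening lemma of \S\ref{sec:outline} (proved by $g=1_{S_n}$ arguments) is exactly what is needed to handle the ``otherwise'' case in Definition~\ref{def:def}. Once that is observed, the rest is Frobenius reciprocity.
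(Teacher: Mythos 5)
Your proof is correct and follows exactly the route the paper intends: the paper gives no separate argument for Proposition~\ref{prop:gen_deg}, saying only that it ``may be proved in the same way as Equation~\eqref{eq:deflation_special}'', and your two-case verification on irreducibles (using $\Res_B\widetilde{\theta^{\times n}} = \theta\times\cdots\times\theta$ and the decomposition of $\Ind_B^{S_m\wr S_n}(\theta\times\cdots\times\theta)$ from Lemma~\ref{lemma:sum}) followed by Frobenius reciprocity for $B \le S_{mn}$ is precisely that argument carried out for $\theta = \chi^\kappa$.
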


We end with an
example showing how Propositions~\ref{prop:gen_basecase},~\ref{prop:defres_decomp}, and ~\ref{prop:gen_deg} and Corollary~\ref{cor:nv} may be
used to calculate the values of an irreducible character of~$S_{mn}$ deflated
with respect to an arbitrary character of $S_m$.

\begin{example}
Let $\theta = \chi^{(2,2)}$. We shall find $(\Defres^{\theta}_{S_4}  \chi^{(6,4,4,2)})(g)$ in the cases where $g\in S_4$ is a transposition or a double transposition. 

Firstly take $g$ to be a transposition.  
By Proposition~\ref{prop:defres_decomp} we have
 \[ (\Defres^{\theta}_{S_4} \!\chi^{(6,4,4,2)})(g) = \sum_{\tau} (\Defres^{\theta}_{S_2} \chi^{\tau})(1_{S_2})  (\Defres^{\theta}_{S_2} \chi^{(6,4,4,2) / \tau})(k) \] where $k = (12) \in S_2$ and the sum is over all partitions $\tau$ of $8$. 
Using  Proposition~\ref{prop:gen_deg} on the first term we obtain 
\[ (\Defres^{\theta}_{S_4} \!\chi^{(6,4,4,2)})(g) = \sum_{\tau} c^{\tau}_{(2,2)(2,2)} (\Defres^{\theta}_{S_2} \chi^{(6,4,4,2) / \tau})(k) \] with the same conditions on the sum. 
 By  Proposition~\ref{prop:gen_basecase}, we need only consider those partitions $\tau$ such that $(6,4,4,2) / \tau$ is 2-decomposable.
 The $2$-quotient of $(6,4,4,2)$ is $\bigl((2,1), (3,2)\bigr)$, so the $\tau$ we must consider
are the partitions   $(6,1^2)$, $(4,3,1)$, $(6,2)$,  $(4,2^2)$, $(4^2)$,  $(3^2,2)$,  $(4,2,1^2)$,  $(2^4)$,  $(3^2,1^2)$. Calculation shows that $c^{\tau}_{(2,2)(2,2)} = 1$ 
when $\tau \in P$ where
\[ P=\{
(4,3,1),(4,2^2), (4^2), (2^4), (3^2,1^2) \} \] 
and that $c^{\tau}_{(2,2)(2,2)}$ is zero in the other four cases. 
Hence by  Proposition~\ref{prop:gen_basecase} we have
 \[ \begin{split}
 (\Defres^{\theta}_{S_4} \chi^{(6,4,4,2)})(g) = \hskip3in \\ \hskip1in \sum_{\tau \in P} \nsgn_2\bigl((6,4,4,2) / \tau\bigr) \left< \Ind^{S_4}_H   \chi^{(2,1) / \tau^{(0)}} \times \chi^{(3,2) / \tau^{(1)}}, \chi^{(2,2)} \right> \end{split} \] 
where    $H=S_{|(2,1) / \tau^{(0)}|} \times S_{|(3,2) / \tau^{(1)}|}$. 
The contributions to the sum from the elements of $P$,  in the order given above, are $-1$, $+2$, $+1$, $+1$ and $+1$ respectively. For example, the $2$-quotient of $(6,4,4,2) / (4,2,2)$ is 
\[ \bigl((2,1) / (1), (3,2) / (2,1)\bigr) \]  
and $\nsgn_2\bigl( (6,4,4,2) / (4,2,2) ) = 1$, so the contribution from $(4,2,2)$ is 
\[ \left< \Ind^{S_4}_{S_2 \times S_2} \chi^{(2,1) / (1)} \times \chi^{(3,2) / (2,1)}, \chi^{(2,2)} \right> = 
\left< \Ind_{1}^{S_4} 1, \chi^{(2,2)} \right> = 2. \] 
Therefore $(\Defres^{\theta}_{S_4} \chi^{(6,4,4,2)})(g) = 4$.

Similar arguments can be used in the case where $g$ is a double transposition.  By Proposition~\ref{prop:defres_decomp} we have
 \[ (\Defres^{\theta}_{S_4} \!\chi^{(6,4,4,2)})(g) = \sum_{\tau} (\Defres^{\theta}_{S_2} \chi^{\tau})(k)  (\Defres^{\theta}_{S_2} \chi^{(6,4,4,2) / \tau})(h) \]
 where $k$ and $h$ are transpositions, and the sum is over all partitions $\tau$ of~$8$.  Proposition~\ref{prop:gen_basecase} and   Corollary~\ref{cor:nv} restrict the possible partitions $\tau$ to be considered and show that
 $(\Defres^{\theta}_{S_4} \chi^{(6,4,4,2)})(g)$ equals
 \[  \sum_{\tau \in P} \nsgn_2\bigl(\tau / \emptyset \bigr) c^{(2,2)}_{\tau^{(0)} \tau^{(1)}} \nsgn_2\bigl((6,4,4,2) / \tau\bigr)
 \left< \Ind^{S_4}_H   \chi^{(2,1) / \tau^{(0)}} \times \chi^{(3,2) / \tau^{(1)}}, \chi^{(2,2)} \right> \] where  
 \[ P = \{ (4,3,1), (4,2^2), (4^2), (2^4), (3^2,1^2) \} \] 
 and  $H=S_{|(2,1) / \tau^{(0)}|} \times S_{|(3,2) / \tau^{(1)}|}$. The contributions to the sum from the elements of $P$, in the order given above, are $+1$, $+2$, $+1$, $+1$ and $+1$ respectively. 
Hence
$(\Defres^{\theta}_{S_4} \!\chi^{(6,4,4,2)})(g) = 6$.  
 \end{example}

\section{Symmetric functions}\label{sec:concl} 

Finally, we discuss the translation of our results
into the language of symmetric functions.
The following well-known facts can be found in~\cite[Chapter I]{Macdonald1995}.
Let $\Lambda$ be the ring of symmetric functions with integer 
coefficients in variables $x_1, x_2,\ldots$, and let $R=\bigoplus_{n\ge 0} \, \Ch(S_n)$. There is a well-known canonical ring isomorphism $\ch\colon R\to \Lambda$, 
where the ring structure on $R$ is given, for $f \in \mathcal{C}(S_m)$ and $g\in \mathcal{C}(S_n)$, by  
$fg = \Ind_{S_m \times S_n}^{S_{mn}}(f \times g)$.
Moreover, the map $\ch$ is an isometry with respect to the standard 
inner products $\lan \cdot, \hskip1pt \cdot \ran$ on $R$ and $\Lambda$. If $\lambda/\mu$ is any skew-partition, 
then $\ch (\chi^{\lambda/\mu}) = s_{\lambda/\mu}$, the skew Schur function corresponding to $\lambda/\mu$. 
Furthermore, suppose that $\beta$ and $\nu$ are partitions, with $|\beta|=m$ and $|\nu|=n$. 
Denote by $s_{\nu}\circ s_{\beta}$ the plethysm of $s_{\nu}$ and $s_{\beta}$ (see~\cite[\S I.8]{Macdonald1995}). 
Then 
\begin{equation}\label{eq:ch}
 \ch\left( \Ind_{S_m\wr S_n}^{S_{mn}} (\widetilde{(\chi^{\beta})^{\times n}} \Inf_{S_n}^{S_m\wr S_n} \chi^{\nu}) \right) = s_{\nu} \circ s_{\beta}
\end{equation}
(see~\cite[\S I.8 and \S I.A.6]{Macdonald1995}). 
By first using Frobenius reciprocity, then the inflation-deflation reciprocity relation of Equation~(\ref{eq:reciprocity}), we have
\[ \left<
\Ind_{S_m\wr S_n}^{S_{mn}} (\widetilde{(\chi^{\beta})^{\times n}} \Inf_{S_n}^{S_m\wr S_n} \chi^{\nu}),
\chi^{\lambda/\mu}
\right> = \left< \chi^\nu, \Defres^{\chi^{\beta}}_{S_n} \chi^{\lda/\mu} \right> \]
for any skew-partition $\lambda / \mu$ with $|\lambda / \mu| = mn$. Comparing with Equation~\eqref{eq:ch} shows
that if $\lambda/\mu$ is such a skew-partition then
\begin{equation}\label{eq:defpl}
 \Defres^{\chi^{\beta}}_{S_n} \chi^{\lda/\mu} = \sum_{\nu} \lan s_{\lda/\mu}, s_{\nu}\circ s_{\beta} \ran \chi^{\nu}
\end{equation}
where the sum is over all partitions $\nu$ of $n$. 

Let $p_l = \sum_{i} x_i^l$ be the power-sum symmetric function, and write $p_{\gamma} = p_{\gamma_1} \cdots p_{\gamma_d}$ for any composition $\gamma=(\gamma_1,\ldots, \gamma_d)$.
If $\gamma$ is a composition of $n$, then $\chi^{\nu} (g_{\gamma}) = \lan p_{\gamma}, s_{\nu} \ran$ (see~\cite[Equation (I.7.8)]{Macdonald1995}).
Using Equation~\eqref{eq:defpl}, we obtain
\begin{equation}\label{eq:trans}
 \begin{split}
  (\Defres_{S_n}^{\chi^{\beta}} \chi^{\lda/\mu})(g_{\gamma}) &= \sum_{\nu} \lan s_{\lda/\mu}, s_{\nu} \circ s_{\beta} \ran \chi^{\nu} (g_{\gamma}) \\
 &= \sum_{\nu}\lan s_{\lda/\mu}, s_{\nu} \circ s_{\beta} \ran \lan s_{\nu}, p_{\gamma} \ran = \lan  s_{\lda/\mu}, p_{\gamma} \circ s_{\beta} \ran
 \end{split}
\end{equation}
where the sums are over all partitions of $n$.

In the case when $\beta = (m)$, we have $s_{\beta} = h_m$, where $h_m$ is the complete symmetric function of degree $m$ (see~\cite[Section I.2]{Macdonald1995}). 
Thus Equation~\eqref{eq:trans} shows that Theorem~\ref{thm:cd} is equivalent to the identity
\begin{equation}\label{eq:cd2}
 \lan  s_{\lda/\mu}, p_{\gamma} \circ h_m \ran = a_{\lda/\mu, \gamma}.
\end{equation}

In~\cite[Section 9]{DLT}, D{\'e}sarm{\'e}nien, Leclerc, and Thibon obtain a formula which implies that 
\begin{equation}\label{eq:DLT}
 s_{\mu} (p_n \circ h_m) = \sum_{\lambda} \epsilon_n (\lambda/\mu) a_{\lambda/\mu, (n)},
\end{equation}
where the sum is over the partitions $\lambda$ of $|\mu|+mn$ such that $\lambda\supseteq \mu$. 
(Our definition of an $m$-border-strip tableau of type $(n)$ is equivalent to the definition of a horizontal $n$-ribbon tableau of weight $m$ in~\cite{DLT}.)
In fact, the formula in~\cite{DLT} is more general, giving a combinatorial description of $s_{\mu} (p_n \circ s_{\kappa})$ for any partition $\kappa$ of $n$.
Clearly, Equation~\eqref{eq:DLT} is equivalent to Equation~\eqref{eq:cd2} in the case $\gamma=(n)$; this leads, after some work, to an alternative proof of Theorem~\ref{thm:cd}. 

Furthermore, in the special case when $\mu=\emptyset$, 
a combinatorial description of the left-hand side of Equation~\eqref{eq:cd2} is given by Macdonald in~\cite[\S I.8, Example 8]{Macdonald1995}. 
Using Lemma~\ref{lemma:horiz}, one can see that
this description is equivalent to the
definition of $a_{\lda, \gamma}$. 

\section*{Acknowledgements}
The authors would like to thank Prof.~Christine Bessenrodt for supplying the
reference to Farahat's paper \cite{Farahat1954}.

\def\cprime{$'$} \def\Dbar{\leavevmode\lower.6ex\hbox to 0pt{\hskip-.23ex
  \accent"16\hss}D}

\end{document}